\newcommand{\incomp}{\mathbin{\|}}
\newcommand{\LL}{\mathcal{L}}
\newcommand{\MM}{\mathcal{M}}
\newcommand{\NN}{\mathcal{N}}
\newcommand{\ds}{\displaystyle}
\newcommand{\lp}{\left (}
\newcommand{\rp}{\right )}
\newcommand{\cG}{\mathcal{G}}
\DeclareMathOperator{\Inc}{Inc}
\DeclareMathOperator{\ldim}{ldim}
\DeclareMathOperator{\ldc}{ldc}
\DeclareMathOperator{\tdc}{tdc}
\DeclareMathOperator{\mult}{mult}
\DeclareMathOperator{\lbc}{lbc}
\def\epsilon{\varepsilon}
\theoremstyle{plain}
\newtheorem{thm}{Theorem}
\newtheorem{theorem}[thm]{Theorem}
\newtheorem{cor}[thm]{Corollary}
\newtheorem{lem}[thm]{Lemma}
\newtheorem{lemma}[thm]{Lemma}
\newtheorem{prop}[thm]{Proposition}
\theoremstyle{definition}
\newtheorem{conj}[thm]{Conjecture}
\newtheorem{question}[thm]{Question}
\newtheorem{defn}[thm]{Definition}
\theoremstyle{remark}
\newtheorem{rk}[thm]{Remark}
\renewcommand{\P}{\mathcal{P}}
\newcommand{\Q}{\mathcal{Q}}
\title[On difference graphs and the local dimension of posets]{On difference graphs and the local dimension of posets}
\author[]{Jinha Kim}
\address{Department of Mathematical Sciences, Seoul National University, Republic of Korea (Kim).}
\email{kjh1210@snu.ac.kr}
\author[]{Ryan R. Martin}
\address{Department of Mathematics, Ames, Iowa, USA (Martin).}
\email{rymartin@iastate.edu}
\author[]{Tom\'a\v{s} Masa\v{r}\'ik}
\address{Department of Applied Mathematics, Charles University, Czech Republic. (Masa\v{r}\'ik).}
\email{masarik@kam.mff.cuni.cz}
\author[]{Warren Shull}
\address{Department of Mathematics and Computer Science, Emory University, Atlanta, Georgia, USA (Shull).}
\email{warren.edward.shull@emory.edu}
\author[]{Heather C. Smith}
\address{School of Mathematics, Georgia Institute of Technology, Atlanta, Georgia, USA (Smith).}
\email{heather.smith@math.gatech.edu}
\author[]{Andrew Uzzell}
\address{Mathematics and Statistics Department, Grinnell College, Grinnell, Iowa, USA (Uzzell).}
\email{uzzellan@grinnell.edu}
\author[]{Zhiyu Wang}
\address{Department of Mathematics, University of South Carolina, Columbia, South Carolina, USA (Wang).}
\email{zhiyuw@math.sc.edu}
\begin{document}
\subjclass[2010]{06A07,05C70}
\keywords{local dimension, difference graphs, difference graph cover, removable pair}

\maketitle
\begin{abstract}
The dimension of a partially-ordered set (poset), introduced by Dushnik and Miller (1941), has been studied extensively in the literature. Recently, Ueckerdt (2016) proposed a variation called local dimension which makes use of partial linear extensions. While local dimension is bounded above by dimension, they can be arbitrarily far apart as the dimension of the standard example is $n$ while its local dimension is only $3$.

Hiraguchi (1955) proved that the maximum dimension of a poset of order $n$ is $n/2$. However, we find a very different result for local dimension, proving a bound of $\Theta(n/\log n)$. This follows from connections with covering graphs using difference graphs which are bipartite graphs whose vertices in a single class have nested neighborhoods.

We also prove that the local dimension of the $n$-dimensional Boolean lattice is $\Omega(n/\log n)$ and make progress toward resolving a version of the removable pair conjecture for local dimension.
\end{abstract}

\section{Introduction}

The order dimension (hereafter, dimension) of a poset, introduced by Dushnik and Miller~\cite{DM41} in 1941, has been studied extensively in the literature.
For a poset $\P=(P,\leq)$ with $x,y\in P$, we use the standard notation $x<y$ to indicate $x\leq y$ and $x\neq y$.
A \textit{realizer} of $\P$ is a non-empty family $\mathcal{L}$ of linear extensions of $\P$ so that $x<y$ in each $L\in \mathcal{L}$ if and only if $x <y$ in $\P$.
The \textit{dimension} of $\P$, denoted $\dim(\P)$, is the size of the smallest realizer.


We investigate a variant, called the local dimension, which was defined by Ueckerdt~\cite{U} and shared with the participants of the \textit{Order and Geometry Workshop} held in Gu\l{}towy, Poland, September 14-16, 2016. The definition was inspired by concepts studied in \cite{boxicity, KU16}.


\begin{defn}
    A \textit{partial linear extension} (abbreviated ``ple'') of a poset $\P$ is a linear extension of a subposet of $\P$.
    A \textit{local realizer} of $\P$ is a non-empty family $\mathcal{L}$ of ple's such that
    \begin{itemize}
        \item if $x< y$ in $\P$, then there is an $L\in \mathcal{L}$ with $x< y$ in $L$;
        \item if $x$ and $y$ are incomparable, then there are $L,L'\in \mathcal{L}$ with $x<y$ in $L$ and $x>y$ in $L'$.
    \end{itemize}
    Given a local realizer $\mathcal{L}$ of $\P$ and an element $x\in P$, the \emph{frequency} $\mu(x,\mathcal{L})$ is the number of ple's in $\mathcal{L}$ that contain $x$.
    The maximum frequency of a local realizer is denoted $\mu(\mathcal{L})=\max_{x\in P}\mu(x,\mathcal{L})$.
    The \emph{local dimension}, $\ldim(\P)$, of $\P$ is $\min_{\mathcal{L}} \mu(\mathcal{L})$ where the minimum is taken over all local realizers $\mathcal{L}$ of $\P$.
\end{defn}

Because each realizer $\mathcal{L}$ of $\P$ is also a local realizer where the frequency of each element is just $|\mathcal{L}|$,
\begin{align}
    \ldim(\P)\leq\dim(\P) . \label{eq:dimbound}
\end{align}

Hiraguchi~\cite{Hiraguchi} proved that the dimension of a poset $P$ with $n$ points is at most $\lfloor n/2\rfloor$ and the standard examples $S_n$ show that this is best possible. However, the local dimension of $S_n$ is only 3 for $n\geq 3$~\cite{U}. Our main result is the following bound for the local dimension of a poset with $n$ elements.
\begin{thm}\label{thm:size-main}
    The maximum local dimension of a poset on $n$ points is $\Theta(n/\log n)$.
\end{thm}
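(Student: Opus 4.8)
The plan is to prove the two bounds separately, after reducing local dimension to a purely graph-theoretic cover parameter. For a poset $\P$, let $G_\P$ denote the bipartite graph recording its incomparabilities (equivalently, work with height-$2$ posets, where $G_\P$ is literally the bipartite incomparability graph between the minimal and maximal elements). The central reduction I would establish is that $\ldim(\P)$ and the \emph{local difference-graph cover number} $\ldc(G_\P)$---the minimum, over all families of difference subgraphs whose union is $G_\P$, of the largest number of members containing a single vertex---agree up to absolute constants. The key combinatorial fact driving this is that the set of incomparable pairs reversed by a single ple must itself form a difference graph (its neighborhoods are forced to nest, since a ple is a linear order); conversely each difference graph in a cover can be turned into a ple of bounded frequency. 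With this in hand, Theorem~\ref{thm:size-main} reduces to showing that the maximum of $\ldc(G)$ over bipartite graphs $G$ on $n$ vertices is $\Theta(n/\log n)$, together with the observation that the split construction $\splitPoset(G)$ realizes any bipartite $G$ as (the incomparability graph of) a height-$2$ poset on $O(|V(G)|)$ points. I expect pinning down this reduction---getting the directions and constants right, and in particular the ``ple $\Rightarrow$ difference graph'' direction---to be the main obstacle; the two extremal estimates below are comparatively routine once it is in place.

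For the lower bound I would use a counting argument. Fix parts $A,B$ with $|A|=|B|=n$; there are $2^{n^2}$ bipartite graphs on them. On the other hand, any $G$ with $\ldc(G)\le k$ is the union of difference subgraphs in which each of the $2n$ vertices appears at most $k$ times, so the total number of vertex--membership incidences is at most $2nk$ and the number of difference graphs used is at most $nk$. Encoding such a cover---for each difference graph its vertex set together with the nesting order that determines it---costs $2^{O(nk\log n)}$ possibilities, which bounds the number of graphs with $\ldc(G)\le k$ from above. Comparing $2^{O(nk\log n)}$ with $2^{n^2}$ forces some $G$ to have $\ldc(G)=\Omega(n/\log n)$. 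Feeding this $G$ through $\splitPoset$ yields a poset on $O(n)$ points whose local dimension is $\Omega(n/\log n)$.

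For the upper bound I would give an explicit difference-graph cover of an arbitrary bipartite $G$ with parts $A,B$ of size $n$. Partition $A$ into blocks of size $s=\lfloor \log_2(n/\log_2 n)\rfloor$. Within a block $A_j$, group the vertices of $B$ by their trace $N(b)\cap A_j$, of which there are at most $2^{s}$ types, and for each realized type $T$ take the biclique between $T$ and $\{b: N(b)\cap A_j=T\}$; bicliques are difference graphs and these partition the edges between $A_j$ and $B$. Each $b\in B$ then lies in exactly one difference graph per block, for a total of $n/s=O(n/\log n)$, while each $a\in A$ lies in at most $2^{s-1}=O(n/\log n)$ difference graphs (those coming from types within its own block that contain it). Hence $\ldc(G)=O(n/\log n)$, and via the reduction every poset on $n$ points satisfies $\ldim(\P)=O(n/\log n)$, matching the lower bound and completing the proof.
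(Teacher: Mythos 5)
Your overall architecture is the same as the paper's: the ple-to-difference-graph correspondence for height-two posets, the reduction of the theorem to bounding the local difference-graph cover number $\ldc$ of bipartite graphs, and the realization of any bipartite graph as the incomparability graph of a height-two poset on the same vertex set. Within that framework, though, your two extremal ingredients are genuinely different. For the lower bound the paper works with a random bipartite graph $\cG(n/2,n/2,1/e)$: a first-moment argument, using the Hardy--Ramanujan asymptotics for the number of integer partitions (which bounds the number of difference graphs with $m$ edges), shows that a.a.s.\ every difference subgraph has many vertices per edge, whence the total and hence local cover numbers are large. Your counting/encoding argument---at most $2^{O(nk\log n)}$ graphs admit a cover in which every vertex lies in at most $k$ difference graphs, versus $2^{n^2}$ bipartite graphs in all---reaches the same conclusion and is correct: a cover can indeed be recorded as a sequence of $O(nk)$ vertex names with separators (the interleaving order of the two sides determines each difference graph), and the cover determines the graph; this is arguably more elementary, as it avoids the partition asymptotics entirely. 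For the upper bound you in effect re-prove the bipartite case of the Erd\H{o}s--Pyber theorem (blocks of size about $\log_2(n/\log_2 n)$, grouping the other side by traces), which the paper simply cites; your construction is correct, and note that bicliques suffice there---the full generality of difference graphs is needed only on the lower-bound side.

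The one genuine gap is the upper bound for posets of height greater than two. Your parenthetical ``equivalently, work with height-$2$ posets'' conceals a real step: to bound $\ldim(\P)$ for an arbitrary $n$-element poset by the local dimension of its split $\Q$ (a height-two poset on $2n$ points), you need something like $\ldim(\P)\le 2\ldim(\Q)-1$, which the paper quotes as a lemma of Barrera-Cruz, Prag, Smith, Taylor, and Trotter. This is not automatic: a reversal of an incomparable pair $(x,y)$ of $\P$ appears in a local realizer of $\Q$ as $x'>y''$, i.e.\ across the two copies, so restricting ple's of $\Q$ to either copy loses exactly the information you need, and a genuine merging argument is required. Relatedly, you flag the ``ple $\Rightarrow$ difference graph'' direction as the expected main obstacle, but that is the easy direction (the sets of reversed partners are nested because a ple is a linear order); the split lemma is where the unproved content sits. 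Two fixes are available: invoke or prove that lemma, or bypass the split altogether by covering the full (non-bipartite) incomparability graph of $\P$ with bicliques---Erd\H{o}s--Pyber, and your trace construction, work for arbitrary graphs---and converting each biclique $(S,T)$ into two ple's with block structures $S<T$ and $T<S$ (valid since all pairs between $S$ and $T$ are incomparable), adding one linear extension of $\P$ for the comparabilities; this yields $\ldim(\P)=O(n/\log n)$ for every poset directly.
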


Our proof uses a correspondence between ple's and \emph{difference graphs} which are bipartite graphs in which there is an ordering of the vertices in one partite class such that their neighborhoods are nested. This relationship allows us to connect results about covering graphs with difference graphs to results about local dimension. 

A \emph{cover} of a graph $G$ is a set $\{H_i\}_{i\in [k]}$ of subgraphs of $G$ such that $\bigcup_{i\in [k]} E(H_i) = E(G)$. If all subgraphs in the cover are complete bipartite graphs, then we say $\{H_i\}_{i\in [k]}$ is a \emph{complete bipartite cover} of $G$. The \emph{local complete bipartite cover number} of $G$, denoted $\lbc(G)$, is the least $\ell$ such that there is a complete bipartite cover of $G$ in which every vertex of $G$ appears in at most $\ell$ of the subgraphs in the cover.

Since complete bipartite graphs are difference graphs, we make use of a theorem of Erd\H{o}s and Pyber \cite{Pyber} which states $\lbc(G) = O(n/\log n)$ for any graph $G$ with $n$ vertices to prove that the local dimension of any poset on $n$ points is $O(n/\log n)$. Their result is best possible, up to a constant factor, by the following theorem:

\begin{thm}[Chung, Erd\H{o}s, Spencer~\cite{CES}]\label{thm:CES}
    There is a graph $G$ such that for any cover of $E(G)$ with complete bipartite graphs, there is a vertex that appears in $\Omega(n/\log n)$ graphs in the cover. In other words, $\lbc(G) = \Omega(n/\log n)$.
\end{thm}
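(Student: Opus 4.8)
The plan is to prove this \emph{existence} statement by a counting argument: I will show that the number of labeled graphs on vertex set $[n]$ admitting a complete bipartite cover of local multiplicity at most $\ell$ is far smaller than the total number $2^{\binom{n}{2}}$ of labeled graphs whenever $\ell = o(n/\log n)$. Consequently some graph $G$ cannot be so covered, forcing $\lbc(G) = \Omega(n/\log n)$. The argument is non-constructive, producing $G$ by cardinality rather than an explicit description.

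The key structural observation is that a complete bipartite cover is determined by purely local data. Suppose $\{H_i\}_{i\in[k]}$ is a complete bipartite cover of a graph $G$ on $[n]$ in which every vertex lies in at most $\ell$ of the $H_i$. To each vertex $v$ I associate a \emph{label} recording the set $S(v)\subseteq[k]$ of bicliques containing $v$ together with, for each $i\in S(v)$, one bit indicating which side of $H_i$ contains $v$. Since each $H_i$ is a subgraph of $G$ and the $H_i$ cover $E(G)$, two vertices $u,v$ are adjacent in $G$ if and only if some $i\in S(u)\cap S(v)$ places $u$ and $v$ on opposite sides of $H_i$. Hence the entire edge set of $G$ is a deterministic function of the $n$ labels, and the number of coverable graphs is at most the number of ways to assign labels to the $n$ vertices.

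Next I would bound the number of possible labels. Because each biclique has at least one vertex on each side, the number of (vertex, biclique) incidences is at least $2k$, while the local-multiplicity hypothesis makes it at most $n\ell$; thus $k\le n\ell/2$, which keeps the label alphabet polynomially bounded. A label is then a choice of at most $\ell$ of these $k$ bicliques together with one bit each, so the number of distinct labels is at most $L:=\sum_{j=0}^{\ell}\binom{k}{j}2^{j}\le (Cn\ell)^{\ell}$ for an absolute constant $C$. Therefore the number of labeled graphs on $[n]$ coverable with local multiplicity at most $\ell$ is at most $L^{n}\le (Cn\ell)^{\ell n}$.

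Finally I compare the two counts. If every graph were coverable with multiplicity $\ell$, then $2^{\binom{n}{2}}\le L^{n}$, so taking base-$2$ logarithms gives $\binom{n}{2}\le \ell n\log_2(Cn\ell)$. Substituting $\ell=c\,n/\log_2 n$ yields $\log_2(Cn\ell)=(2+o(1))\log_2 n$, whence the right-hand side is $(2c+o(1))n^{2}$, which is strictly smaller than $\binom{n}{2}\sim n^{2}/2$ once $c<1/4$. For such $\ell$ the inequality fails, so there must exist a graph $G$ with no complete bipartite cover of local multiplicity at most $\ell$, i.e.\ $\lbc(G)\ge c\,n/\log_2 n=\Omega(n/\log n)$. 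The main technical point to get right is the estimate on the number of labels: one must justify the bound $k\le n\ell/2$ on the total number of bicliques and track the logarithmic factors carefully, so that the threshold lands at $\Theta(n/\log n)$ rather than being off by a $\log$ factor.
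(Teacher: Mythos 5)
Your counting argument is correct: the observation that a complete bipartite cover of local multiplicity at most $\ell$ determines $G$ from per-vertex labels (the set of at most $\ell$ bicliques containing the vertex, plus one side bit each) is valid, since $uv\in E(G)$ exactly when some common biclique separates $u$ and $v$; the bound $k\le n\ell/2$ and the comparison $2^{\binom{n}{2}}\le (Cn\ell)^{\ell n}$ then fail precisely when $\ell=cn/\log_2 n$ with $c<1/4$. Two harmless loose ends you should state explicitly: bicliques with an empty side carry no edges and may be discarded (this is what justifies ``at least one vertex on each side''), and the index set should be fixed uniformly, say $[\lfloor n\ell/2\rfloor]$, or one should sum over $k$, so that all coverable graphs draw labels from the same alphabet. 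Note, however, that the paper does not prove Theorem~\ref{thm:CES} at all --- it cites Chung, Erd\H{o}s and Spencer --- and what it does prove is the stronger difference-graph analogue, Lemma~\ref{lem:diff_graph}, by a genuinely different method: Proposition~\ref{prop:diff_graph} takes a random bipartite graph $G\sim\mathcal{G}(n/2,n/2,1/e)$ and shows by a first-moment argument, using the Hardy--Ramanujan asymptotics for integer partitions to count difference graphs with $m$ edges, that a.a.s.\ every difference subgraph $H$ of $G$ satisfies $|V(H)|/|E(H)|\ge(1-\epsilon)/\ln n$; combined with a Chernoff bound on $|E(G)|$ this gives $\tdc(G)=\Omega(n^2/\log n)$, and Corollary~\ref{cor:sharp-ex} divides by $n$ to get $\ldc(G)=\Omega(n/\log n)$, which implies the biclique statement since every complete bipartite graph is a difference graph. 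The trade-off: your encoding argument is more elementary (no partition asymptotics, no random graphs) and even gives a slightly better explicit constant for the biclique version, but as written it does not extend to difference-graph covers --- a single side bit no longer determines adjacency within a difference graph --- and that stronger version is exactly what the paper needs in Lemma~\ref{lemma:sharpness} to build the poset of large local dimension. (Your scheme could be repaired for difference graphs by recording each vertex's threshold index in each covering graph, costing an extra $\log_2 n$ bits per incidence and hence only a constant factor, but this step is missing and would need to be done to replace the paper's lemma.)
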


Because complete bipartite graphs are only a special type of difference graph, we use probabilistic tools to generalize this result of Chung, Erd\H{o}s, and Spencer to difference graphs.
Our Lemma~\ref{lem:diff_graph} is key to proving the lower bound for maximum local dimension in Theorem~\ref{thm:size-main}.

\begin{lem}\label{lem:diff_graph}
    There is a bipartite graph $G$ such that for any cover of $E(G)$ with difference graphs, there is a vertex that appears in $\Omega(n/\log n)$ graphs in the cover.
\end{lem}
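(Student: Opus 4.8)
The plan is to start from the Chung--Erd\H{o}s--Spencer graph $G$ of Theorem~\ref{thm:CES} — or rather a suitable bipartite relative of it — and show that allowing general difference graphs (instead of only complete bipartite graphs) in the cover cannot buy more than a constant factor savings in the per-vertex load. The key structural observation is that a difference graph, by virtue of having nested neighborhoods on one side, has a very restricted edge set: if we order one partite class $v_1,\dots,v_m$ so that $N(v_1)\subseteq N(v_2)\subseteq\cdots\subseteq N(v_m)$, then the edges of the difference graph are essentially controlled by a staircase, and in particular a difference graph on few vertices cannot contain too many edges without containing a large complete bipartite subgraph. So the first step is to record a lemma to the effect that every difference graph $H$ can be decomposed into (or has its edges captured by) a bounded number of bicliques relative to its size — more precisely, that a difference graph on $t$ vertices contains a complete bipartite subgraph covering a constant fraction of some relevant quantity, so that a difference-graph cover with per-vertex load $\ell$ yields a complete bipartite cover with per-vertex load $O(\ell\log n)$, or conversely that the entropy/counting argument tolerates difference graphs directly.

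\medskip

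The cleaner route, which I would pursue first, is the direct counting (entropy/dimension) argument rather than reduction to Theorem~\ref{thm:CES} as a black box. I would take $G$ to be a bipartite graph on parts $A$ and $B$, each of size $n/2$, chosen randomly: include each edge independently with probability $1/2$. The heart of the matter is to count how much ``information'' a single difference graph can contribute. A difference graph $H$ used in the cover is determined, up to the vertices it touches, by the nested chain of neighborhoods; the number of distinct difference graphs on a fixed vertex set of size $s$ is at most $2^{O(s\log s)}$ — far fewer than the $2^{\Theta(s^2)}$ bipartite graphs on that set. Suppose for contradiction that $G$ admits a difference-graph cover in which every vertex appears in at most $k$ subgraphs, with $k = o(n/\log n)$. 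Then each vertex's incidences are described by which of its $\le k$ difference graphs cover each incident edge, and the total number of bipartite graphs $G$ that could arise from such a bounded-load cover is at most (number of choices of the collection of difference graphs) times (number of load-$\le k$ assignments). I would bound the first factor using the $2^{O(s\log s)}$ count above together with the fact that the total size $\sum_i |V(H_i)|$ is at most $kn$, and the second by a standard per-vertex entropy estimate.

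\medskip

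Assembling these bounds, the number of bipartite graphs on $A \cup B$ expressible with a difference-graph cover of per-vertex load $k$ is at most $2^{O(kn\log n)}$ (the extra $\log n$ coming precisely from the $s\log s$ cost of specifying each difference graph, as opposed to a single bit per biclique in the Chung--Erd\H{o}s--Spencer setting). Since there are $2^{\binom{n/2}{2}}=2^{\Theta(n^2)}$ bipartite graphs on $A \cup B$, and a $(1-o(1))$-fraction of them require a cover, we need $2^{O(kn\log n)} \ge 2^{\Omega(n^2)}$, forcing $k = \Omega(n/\log n)$. This is the punchline and it parallels the CES proof, with difference graphs replacing bicliques; the only place the argument pays is the $\log n$ factor in the per-graph description length, which is exactly the factor already present in the complete-bipartite case.

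\medskip

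I expect the main obstacle to be controlling the counting cleanly: specifically, bounding the number of difference graphs on a vertex set and, more delicately, ensuring the accounting of per-vertex load against total description length is tight enough that the $\log n$ in the denominator survives. One has to be careful that specifying a difference graph requires roughly $s\log s$ rather than $s^2$ bits — this is where the nested-neighborhood structure is essential and must be invoked explicitly (each vertex on the ``free'' side is assigned a threshold position in the chain, costing $\log s$ bits). A secondary subtlety is that vertices may lie in many difference graphs of wildly varying sizes, so the bound $\sum_i |V(H_i)| \le kn$ must be combined with convexity or a worst-case distribution of sizes to extract the clean $2^{O(kn\log n)}$ bound; I would handle this by charging each difference graph's description cost to its vertices and summing, so that the total cost is $\sum_i |V(H_i)|\log|V(H_i)| \le (\sum_i |V(H_i)|)\log n \le kn\log n$.
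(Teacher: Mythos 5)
Your proposal is correct, but it reaches the lemma by a genuinely different route than the paper. You adapt the Chung--Erd\H{o}s--Spencer counting argument directly to difference graphs: the number of difference graphs on a fixed labelled vertex set of size $s$ is at most $2^{O(s\log s)}$ (a linear order on one side plus a threshold per vertex on the other), a cover with per-vertex load $k$ satisfies $\sum_i |V(H_i)|\le kn$ and so can be described with $O(kn\log n)$ bits, hence at most $2^{O(kn\log n)}$ bipartite graphs admit such a cover --- too few compared with the $2^{(n/2)^2}=2^{\Theta(n^2)}$ bipartite graphs with parts of size $n/2$ unless $k=\Omega(n/\log n)$. The paper argues differently: it takes $G\sim\mathcal{G}(n/2,n/2,1/e)$ and runs a union bound over difference \emph{subgraphs}, counting difference graphs by their number of \emph{edges} via the injection into integer partitions and the Hardy--Ramanujan asymptotics for $|\mathcal{P}_m|$; this shows that a.a.s.\ every difference subgraph $H\subseteq G$ has $|V(H)|/|E(H)|\ge (1-\epsilon)/\ln n$, which together with a Chernoff bound on $|E(G)|$ yields the stronger intermediate statement $\tdc(G)=\Omega(n^2/\log n)$, and then $\ldc(G)\ge \tdc(G)/n$ by averaging. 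Your approach is more elementary (no partition asymptotics, no Chernoff bound) and makes transparent that the $\log n$ loss is exactly the per-vertex description cost of a difference graph; the paper's approach buys explicit constants ($\ldc(G)\ge(1-o(1))\frac{n}{4e\ln n}$), the total-cover-number bound $\tdc$, and the fact that the property holds a.a.s.\ for a random graph rather than merely for some graph obtained by counting. Two cosmetic points: the number of bipartite graphs with both parts of size $n/2$ is $2^{(n/2)^2}$, not $2^{\binom{n/2}{2}}$ (both are $2^{\Theta(n^2)}$, so nothing breaks), and the phrase about a $(1-o(1))$-fraction of graphs ``requiring a cover'' should just be the comparison of the two counts, since every graph has some difference-graph cover and only the load is at issue. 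Finally, you were right to abandon the reduction floated in your first paragraph: converting a difference-graph cover of load $\ell$ into a biclique cover of load $O(\ell\log n)$ and then invoking Theorem~\ref{thm:CES} would only give a lower bound of $\Omega(n/\log^2 n)$, losing a logarithmic factor.
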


This lemma may be of independent interest and the proof is given in Section~\ref{sec:diff_graph}.
The connection with local dimension is made in Section~\ref{sec:size}.

In Section~\ref{sec:Boolean}, we use the correspondence with difference graphs to give a counting argument for a lower bound for the local dimension of $\textbf{2}^{n}$, which denotes the subset lattice on $[n]$. The upper bound comes from \eqref{eq:dimbound} and the fact that $\dim\left(\textbf{2}^{n}\right)=n$.

\begin{thm}\label{thm:BooleanLatticeLB}
    For any positive integer $n$,
    \begin{align*}
        \frac{n}{2e\log n} \leq \ldim\left(\textbf{\rm\bf 2}^{n}\right) \leq n .
    \end{align*}
\end{thm}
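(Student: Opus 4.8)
The upper bound is immediate: by \eqref{eq:dimbound} we have $\ldim(\mathbf{2}^{n})\le\dim(\mathbf{2}^{n})=n$, since the subset lattice on $[n]$ has dimension $n$. So the content is the lower bound, and the plan is to exploit the correspondence with difference graphs only for the incomparabilities between singletons and general sets. Fix an optimal local realizer $\mathcal{L}$ of $\mathbf{2}^{n}$ and set $t=\mu(\mathcal{L})=\ldim(\mathbf{2}^{n})$. Observe that $\{i\}$ and a nonempty set $S$ are incomparable exactly when $i\notin S$, and that $S^{c}$ ranges over all $2^{n}-1$ subsets other than $[n]$ as $S$ ranges over the nonempty sets.

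For each ple $L\in\mathcal{L}$ I would form the bipartite graph $G_{L}$ with parts the singletons and the sets appearing in $L$, joining $\{i\}$ to $S$ precisely when $S<_{L}\{i\}$. The key local fact is that $S<_{L}\{i\}$ forces $i\notin S$, since $i\in S$ would give $\{i\}<_{\mathbf{2}^{n}}S$ and hence $\{i\}<_{L}S$; thus $N_{G_{L}}(S)=\{i:\{i\}>_{L}S\}\subseteq S^{c}$, and these neighborhoods are nested as $S$ runs along $L$. Consequently $G_{L}$ is a difference graph and the distinct values $N_{G_{L}}(S)$ form a chain of at most $|A_{L}|+1$ subsets of $[n]$, where $A_{L}$ denotes the set of singletons appearing in $L$. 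Using only the reversal requirement in the direction $S<_{L}\{i\}$ (which suffices for a lower bound), the covering axiom of a local realizer yields $\bigcup_{L}N_{G_{L}}(S)=S^{c}$ for every nonempty $S$.

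Let $\mathcal{F}=\bigcup_{L}\{N_{G_{L}}(S):S\text{ appears in }L\}$ be the family of all neighborhood values. Then each of the $2^{n}-1$ complements $S^{c}$ is a union of at most $t$ members of $\mathcal{F}$, because $S$ lies in at most $t$ ples. The step I expect to be the main obstacle is bounding $|\mathcal{F}|$ by a polynomial in $n$ rather than in $2^{n}$: a priori the chain values could proliferate across exponentially many ples, and then the count below would collapse. Here the frequency constraint does the work. Each singleton appears in at most $t$ ples, so $\sum_{L}|A_{L}|\le tn$; since every $G_{L}$ contributing a nonempty value contains a singleton, the number of such ples is also at most $tn$, and combining with the chain bound gives $|\mathcal{F}|\le\sum_{L}(|A_{L}|+1)\le 2tn$.

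Finally I would count directly: the number of subsets expressible as a union of at most $t$ members of a family of size at most $2tn$ is at most $\sum_{j=0}^{t}\binom{2tn}{j}\le(2en)^{t}$. Comparing this with the $2^{n}-1$ distinct complements gives $2^{n}-1\le(2en)^{t}$, and an elementary estimate on logarithms yields $t\ge\frac{\log(2^{n}-1)}{\log(2en)}\ge\frac{n}{2e\log n}$ for all $n\ge2$, as claimed. The only delicate points are the nesting claim for $G_{L}$, which is immediate from the forced relation above, and the bound $|\mathcal{F}|\le 2tn$, which is precisely where the boundedness of the frequency—rather than of the number of ples—is essential.
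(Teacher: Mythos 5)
Your proposal is correct, and it takes a genuinely different route from the paper's. The paper first passes to the two-layer subposet $P(1,n-k;n)$ with $k=\lceil n/e\rceil$ (using monotonicity of local dimension), reformulates the problem as lower-bounding the local difference graph cover number $\ldc(G)$ of the critical-pair graph of those layers, and then runs a dichotomy: either some $(n-k)$-set $S$ meets only ``small'' difference graphs (fewer than $b=2\ln n$ edges at $S$), forcing $S$ into at least $k/b\geq n/(2e\ln n)$ covering graphs, or every $S$ meets a ``big'' one, in which case counting big graphs (at most $\ell n/b$, by the frequency bound on singletons) against the number of sets each can serve (at most $\binom{n-b}{k-b}$, by nestedness) is incompatible with $\binom{n}{k}$ unless $\ell\geq n\ln n$. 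You instead stay in the full lattice and make one global count: the frequency bound caps both the size of your family $\mathcal{F}$ of nested-neighborhood values ($|\mathcal{F}|\leq 2tn$) and the number of members needed to assemble each complement (at most $t$), so at most $(2en)^t$ sets are expressible as such unions, against the $2^n-1$ complements that must all arise. Both arguments hinge on the same structural fact---within one ple the reversed-singleton neighborhoods of the sets form a chain---but your union-counting replaces the paper's big/small case analysis and its choice of layer $k$. Your approach buys a better constant: $t\geq \log(2^n-1)/\log(2en)=(1-o(1))\,n/\log_2 n$, compared with the paper's $n/(2e\ln n)$, which speaks directly to the paper's concluding question about improving constants. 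The paper's approach buys a stronger structural statement: the thin subposet $P(1,n-k;n)$ alone already has local dimension $\Omega(n/\log n)$, which your argument over all of $\mathbf{2}^n$ does not yield, and it stays within the $\ldc$ framework used elsewhere in the paper. Two trivial cleanups: discard $\emptyset$ from $\mathcal{F}$ (it never helps a union), and note that, exactly like the paper's own proof, your argument covers $n\geq 2$, the $n=1$ case of the stated theorem being degenerate since $\log 1=0$.
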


The removable pair conjecture for dimension~\cite{Trotter}, which originated in 1971, states that for any poset $\P$ with at least 3 points, there is a pair of points $\{x,y\}$ such that $\dim(\P) \leq \dim(\P-\{x,y\})+1$.
The analogous conjecture (Conjecture~\ref{conj:TwoElementsRemoval}) can be made for local dimension.

\begin{conj}[Removable Pair]\label{conj:TwoElementsRemoval}
    For any poset $\P=(P,\le)$ for $|P|\ge3$, there are two elements $x,y$ in $P$ such that $\ldim(\P) \le \ldim(\P-\{x,y\})+1$.
\end{conj}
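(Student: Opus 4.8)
The plan is to establish the existence of a cheaply deletable pair by starting from an optimal local realizer of the smaller poset and reinserting the two deleted elements using only one extra unit of frequency per point. First I would record the easy monotonicity fact that deleting elements cannot increase local dimension: restricting every ple in a local realizer of $\P$ to a subposet $\Q$ yields a local realizer of $\Q$ whose frequencies do not exceed the originals, so $\ldim(\Q)\le\ldim(\P)$. Hence for \emph{every} pair $\{x,y\}$ we automatically have $\ldim(\P-\{x,y\})\le\ldim(\P)$, and Conjecture~\ref{conj:TwoElementsRemoval} reduces to exhibiting a single pair for which the deletion drops the local dimension by at most one, i.e. $\ldim(\P-\{x,y\})\ge\ldim(\P)-1$.

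To produce such a bound I would fix an optimal local realizer $\LL'$ of $\P':=\P-\{x,y\}$, with $\mu(\LL')=\ldim(\P')=:d'$, and build a local realizer of $\P$ of maximum frequency at most $d'+1$. The only relations not yet witnessed are those incident to $x$ or $y$ (and the pair $\{x,y\}$ itself), so it suffices to insert $x$ and $y$ into the ple's of $\LL'$, and possibly adjoin a bounded number of new ple's, so that every element of $P'$ gains frequency at most one. Comparabilities are cheap: a relation $z<x$ can be witnessed by inserting $x$ directly above $z$ inside an existing ple already containing $z$, which costs $z$ nothing. The difficulty is incomparabilities: a pair $x\incomp z$ demands both $z<x$ and $x<z$ somewhere, and a single ple places $x$ on only one side of $z$, so a naive scheme forces each such $z$ into two of the modified ple's, raising its frequency by two.

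The heart of the argument is therefore to choose $\{x,y\}$ so that this doubling is avoided, and here I would translate the problem into the difference-graph language of the earlier sections: a local realizer corresponds to a cover of the incomparability structure by difference graphs, reinserting an element corresponds to adding one vertex to the ground set, and a cheap deletion corresponds to a vertex whose incident edges are absorbed into the existing cover with only one new part per vertex. In this picture one seeks a pair whose neighborhoods are nested-compatible with an optimal cover, so that the two orientations needed for each incomparability split between a ple in which $z$ already appears (reused, at no extra cost to $z$) and at most one genuinely new ple. Natural candidates to try are a maximal together with a minimal element, or a critical pair, since extremal elements minimize the set of incomparabilities that must be realized in both directions.

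The step I expect to be the main obstacle is exactly this control of frequency doubling: guaranteeing, simultaneously for all $z$, that at most one of the two required orientations of $x\incomp z$ (and of $y\incomp z$) forces a fresh ple containing $z$, while keeping the number of ple's containing the reinserted elements $x$ and $y$ themselves below $d'+1$. This is precisely where a local, pair-by-pair insertion scheme breaks down, and where I would expect to need a global structural input—either an extremal choice of the pair or a counting argument in the difference-graph formulation analogous to the one behind Lemma~\ref{lem:diff_graph}—to certify that at least one pair admits a consistent reinsertion. A full solution may only be attainable for restricted classes (yielding the weaker ``version'' of the conjecture), with the general case hinging on understanding how optimal difference-graph covers behave under the addition of two vertices.
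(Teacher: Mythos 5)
You were asked to prove Conjecture~\ref{conj:TwoElementsRemoval}, but this statement is a \emph{conjecture} in the paper: the authors do not prove it, and it remains open. What the paper establishes are special cases --- Theorem~\ref{thm:twoElementsHeightTwo} (the conjecture for posets of height at most two) and the weaker Theorem~\ref{thm:FourElementsRemoval} (four removable elements costing at most two). Your proposal is likewise not a proof, and you acknowledge this. The parts you do argue are fine: local dimension is monotone under deletion of elements, so the conjecture is indeed equivalent to exhibiting one pair $\{x,y\}$ with $\ldim(\P-\{x,y\})\ge\ldim(\P)-1$; and the obstruction you isolate is the right one (each $z\incomp x$ needs both orientations of the pair witnessed, so a naive reinsertion of $x$ charges $z$ twice). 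But the step you defer --- choosing the pair and giving a reinsertion scheme in which every element gains frequency at most one --- is the entire content of the conjecture. No pair is specified, no scheme is given, and the appeal to ``a global structural input'' or ``a counting argument analogous to Lemma~\ref{lem:diff_graph}'' is a hope, not an argument. That is the genuine gap.

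It is worth noting that the paper's partial progress implements exactly the extremal-pair idea you float at the end. If $x$ is minimal and $y$ is maximal, then $x$ can be placed at the bottom and $y$ at the top of existing ple's at zero cost to the elements already in them; this supplies comparabilities and one orientation of every incomparability involving $x$ or $y$. The reversals are then handled so that each other element is charged at most once: via a single linear extension from Bogart's theorem (Theorem~\ref{thm:bog}) when $x\incomp y$ (Lemma~\ref{lem:twoElementsMaximal}), or via the two ple's with block structures $I_x<x$ and $y<I_y$ when $x<y$ and no element is incomparable to both, since then $I_x\cap I_y=\emptyset$ (Lemma~\ref{lem:twoElementsSpecial}). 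Every height-two poset contains a pair satisfying one of these hypotheses, which is precisely why the paper can prove Theorem~\ref{thm:twoElementsHeightTwo} and nothing stronger; a general poset need not contain such a pair, and that is exactly where the conjecture stands open. If you pursue your difference-graph reformulation, the sharp open question is whether every poset admits a pair for which such a ``one extra ple per element'' reinsertion exists.
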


In Section~\ref{sec:rem_pair}, we extend a number of results about dimension to make partial progress toward resolving Conjecture~\ref{conj:TwoElementsRemoval}.
Using a classical result (Theorem~\ref{thm:bog}) by Bogart~\cite{Bog72} about the existence of a linear extension with certain properties, we prove that Conjecture~\ref{conj:TwoElementsRemoval} is true for posets of height two.

\begin{theorem}[Removable pair for posets of height two]\label{thm:twoElementsHeightTwo}
    For a poset $\P=(P, \le)$ with $|P|\ge3$ and height at most 2, there are two elements $x,y$ in $P$ such that
    \begin{align*}
        \ldim(\P) \le \ldim(\P-\{x,y\})+1 .
    \end{align*}
\end{theorem}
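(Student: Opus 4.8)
The plan is to fix a convenient pair $\{x,y\}$, take an optimal local realizer of $\P-\{x,y\}$, and enlarge it to a local realizer of $\P$ whose maximum frequency grows by at most one. First I would use that height at most two forces the ground set to split as $P=A\cup B$, where $A$ is the set of minimal points and $B$ the set of maximal points, each an antichain, with every comparability running from $A$ to $B$. If $\P$ is an antichain the claim is immediate, since an antichain on at least two points has local dimension $2$ while every nonempty subposet has local dimension at least $1$; so I may assume some comparability $x<y$ holds and take this $x\in A$, $y\in B$ as the removable pair. Write $\P':=\P-\{x,y\}$, let $\mathcal L'$ be a local realizer of $\P'$ with $\mu(\mathcal L')=\ldim(\P')=:d$, and aim for a local realizer $\mathcal L$ of $\P$ with $\mu(\mathcal L)\le d+1$.

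The construction separates the two \emph{directions} of each new critical pair. Because $x$ is minimal and $y$ is maximal, a single full linear extension $L^{+}$ of $\P$ that begins with $x$ and ends with $y$ is a legitimate ple, and in one stroke it realizes every new comparability ($x<b$ for $b$ above $x$, $a<y$ for $a$ below $y$, and $x<y$) and reverses every incomparable pair $\{x,z\}$ as $x<z$ and every incomparable pair $\{y,z\}$ as $z<y$. Adding $L^{+}$ to $\mathcal L'$ raises the frequency of each old element by exactly one, which consumes the entire $+1$ budget. Consequently the remaining, ``hard'' directions, namely $z<x$ for each $z\incomp x$ and $y<z$ for each $z\incomp y$, must be produced without any further appearance of the old elements, so I would realize them by \emph{inserting} $x$ and $y$ into ples that are already present: inserting $x$ just beneath its up-set (hence above the incomparable partners lying below that cut) and $y$ just above its down-set changes only the frequencies of $x$ and $y$ and leaves every old element untouched.

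This is exactly the point at which Bogart's theorem \cite{Bog72} is needed. It supplies linear extensions of $\P$ in which all partners incomparable to $x$ can be made to precede the up-set of $x$ (and dually for $y$), which is precisely the ordering that lets a controlled number of ples reverse all the hard-direction pairs at once. Granting this, the accounting closes: each element of $\P'$ is touched only by $L^{+}$ and so has frequency at most $d+1$, the inserted copies of $x$ and $y$ cost nothing to the old elements, and it remains to check that $x$ and $y$ each land in at most $d+1$ ples and that $\{x,y\}$, being comparable, needs no reverse direction.

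The main obstacle is this simultaneous control of frequencies. Reversing $\{x,z\}$ as $z<x$ forces $x$ and $z$ into a common ple with $x$ above $z$, yet $x$, being minimal, can never be lifted above its up-set; the symmetric constraint pins $y$ from below, and since $x<y$ the two hard directions cannot be merged into a single extra ple. Making all of these reversals coexist while no old element gains more than one appearance and $x,y$ stay within frequency $d+1$ is the delicate heart of the argument, and it is Bogart's structural guarantee on linear extensions that renders the required packing possible. A small amount of separate care will be needed for degenerate configurations (isolated points, or situations where the optimal $\mathcal L'$ spreads an element across many ples), but these do not affect the main line of reasoning.
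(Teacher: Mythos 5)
There is a genuine gap, and it sits exactly where you placed the ``delicate heart'' of the argument: the hard-direction reversals cannot be produced the way you propose, and Bogart's theorem does not rescue your setup. Having spent the entire $+1$ budget on the full linear extension $L^{+}$, every hard direction must come from inserting $x$ or $y$ into ples of the \emph{fixed} optimal realizer $\mathcal{L}'$. But an insertion of $x$ into a ple $M$ is only legal below every element of $U(x)\cap M$, so it reverses $\{x,z\}$ only for those $z$ that happen to lie below that cut in $M$; nothing forces an optimal $\mathcal{L}'$ to contain, for each $z\incomp x$, a ple in which $z$ sits below all of $U(x)\cap M$. (For instance, if $U(x)=\{u_1,u_2\}$ and the only ples of $\mathcal{L}'$ containing $z$ order these elements as $u_2<z<u_1$ and $u_1<z<u_2$, then no legal insertion of $x$ ever lands above $z$.) Bogart's theorem cannot fill this hole: it produces a \emph{full linear extension} of $\P$, and adding one to the realizer raises every old element's frequency by one more, a budget already consumed by $L^{+}$; nor can it ``guide insertions,'' since the internal order of the ples in $\mathcal{L}'$ is fixed. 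Finally, nothing in your sketch bounds how many ples $x$ (or $y$) must be inserted into, so even the frequencies of $x$ and $y$ are uncontrolled.

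The second, related problem is your choice of pair, which is backwards relative to what makes the proof work. The paper first asks whether there is an \emph{incomparable} minimal--maximal pair. If yes, it uses that pair: the easy directions come for free by modifying the ples containing a fixed pivot element $z$ into block form $x<L_i<y$ (every other element shares a ple with $z$, so all easy directions are caught at no cost to old elements), and then \emph{all} hard directions fit into a single Bogart extension, because $x\incomp y$ permits one linear extension in which $y$ lies below everything incomparable to it and $x$ lies above everything incomparable to it. Only when every minimal element is comparable to every maximal element does the paper take a comparable pair $x<y$ --- and in that case no element is incomparable to both, so the hard directions are covered by two new \emph{partial} ples with block structures $I_x<x$ and $y<I_y$, whose ground sets are disjoint, costing each old element at most one appearance. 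With your choice, an element $w$ with $w\incomp x$ and $w\incomp y$ may exist; its two hard directions $w<x$ and $y<w$ cannot share a ple (that would force $y<x$ against the comparability $x<y$), so both must be insertions into ples already containing $w$ --- precisely the unfounded step above. The missing ideas are thus the case analysis (incomparable pair first, comparable pair only in its absence) and the opposite allocation of the $+1$: spend it on the hard directions, and get the easy directions for free by modification.
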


Furthermore we prove an analogous result to a theorem by Tator~\cite{Tator}, showing that one can find four elements whose removal decreases the local dimension by at most two.

\begin{theorem}[Removable quadruple]\label{thm:FourElementsRemoval}
   For a poset $\P=(P, \le)$ with $|P|\ge5$, there are four elements $x,y,z,w$ in $P$ such that
    \begin{align*}
        \ldim(\P) \le \ldim(\P-\{x,y,z,w\})+2 .
    \end{align*}
\end{theorem}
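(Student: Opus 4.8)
The plan is to exploit the defining feature of local dimension: if $\mathcal{L}'$ is an optimal local realizer of $\P-S$ and we adjoin to it two full linear extensions $L_1,L_2$ of $\P$, then the frequency of \emph{every} element rises by exactly two, since each element lies in each of $L_1,L_2$. Consequently it suffices to find a four-element set $S$ together with two linear extensions $L_1,L_2$ of $\P$ that, between them, reverse every incomparable pair meeting $S$; that is, for each pair $x\incomp y$ with $\{x,y\}\cap S\neq\emptyset$ one has $x<y$ in $L_1$ and $y<x$ in $L_2$. Indeed $L_1$ realizes every comparability of $\P$, the family $\mathcal{L}'$ already realizes every relation internal to $\P-S$, and $L_1,L_2$ supply both directions of every incomparability touching $S$; thus $\mathcal{L}'\cup\{L_1,L_2\}$ is a local realizer of $\P$ whose maximum frequency is at most $\ldim(\P-S)+2$ (the elements of $S$ occurring only in $L_1,L_2$, hence with frequency two). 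I will call such an $S$ a \emph{good quadruple} and reduce the theorem to producing one.

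Two cases are immediate. First, if the width of $\P$ is at most two then, by Dilworth's bound $\dim(\P)\le\operatorname{width}(\P)\le 2$, so $\ldim(\P)\le 2$ and any four elements $x,y,z,w$ satisfy $\ldim(\P)\le 2\le\ldim(\P-\{x,y,z,w\})+2$. Second, if the height of $\P$ is at most two, I apply Theorem~\ref{thm:twoElementsHeightTwo} to obtain a pair $\{x,y\}$ with $\ldim(\P)\le\ldim(\P-\{x,y\})+1$; since $\P-\{x,y\}$ still has height at most two and at least three points, a second application yields a pair $\{z,w\}$ with $\ldim(\P-\{x,y\})\le\ldim(\P-\{x,y,z,w\})+1$, and adding the two inequalities finishes this case.

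It remains to treat posets of width and height both at least three, where I will construct a good quadruple as a disjoint union of at most two chains that are mutually incomparable. If the height is at least four I take $S$ to be a four-element chain $c_1<c_2<c_3<c_4$, obtaining $L_1$ by pushing $S$ as low as possible and $L_2$ by pushing it as high as possible, which reverses every pair $c_i\incomp v$. When the height is exactly three I take a three-element chain $C$ together with a fourth element $e$ incomparable to all of $C$ (or, failing that, two mutually incomparable two-element chains $C_1,C_2$), and build $L_1$ by pushing $C$ (resp.\ $C_1$) down while pushing $e$ (resp.\ $C_2$) up, with $L_2$ the reverse assignment; the opposite pushes are exactly what reverses the pairs between the two chains as well as each chain's external incomparabilities.

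The main obstacle is twofold. The delicate analytic point is that these linear extensions must actually exist, i.e.\ the relation obtained from $\P$ by adjoining $c<v$ for the ``down'' chain and $v<c'$ for the ``up'' chain (over all relevant incomparable $v$) must be acyclic; this can fail for a careless choice, since a cycle appears precisely when an element of the up chain lies below an element of the down chain while both are incomparable to a common vertex, so I must verify that mutual incomparability of the two chains excludes every such cycle. The second, more combinatorial, difficulty is guaranteeing that a good quadruple of one of these shapes always exists once $\P$ has width and height at least three; the hard instances are the height-three posets, for which I expect to argue that width at least three forces either an element incomparable to some maximal three-chain or two mutually incomparable two-chains, thereby closing the final case.
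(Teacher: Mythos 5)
Your general framework---adjoin to an optimal local realizer of $\P-S$ two full linear extensions of $\P$ that between them reverse every incomparable pair meeting $S$---is sound, and it is exactly the mechanism behind the paper's Two Chain Removal theorem (Theorem~\ref{thm:twoChainsRemoval}), proved there via Bogart's theorem (Theorem~\ref{thm:bog}). Your height $\ge 4$ case (remove a $4$-chain) and height $\le 2$ case (apply Theorem~\ref{thm:twoElementsHeightTwo} twice) coincide with the paper's proof. The gap is in the one case that needs a new idea, height exactly $3$: the dichotomy you say you ``expect to argue''---that width $\ge 3$ forces either an element incomparable to all of some maximal $3$-chain, or two mutually incomparable $2$-chains---is false. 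Take $P=\{a,b,u,v,c\}$ with $a<m<c$ for each $m\in\{b,u,v\}$ and $\{b,u,v\}$ an antichain. This poset has height $3$ and width $3$; every maximal $3$-chain has the form $a<m<c$, and each element outside it lies above $a$ and below $c$, so no element is incomparable to any $3$-chain; moreover every $2$-chain contains $a$ or $c$, each of which is comparable to everything, so no two mutually incomparable $2$-chains exist. (A good quadruple does exist here, e.g.\ $S=\{a,b,u,c\}$ with $L_1$ ordered $a<b<u<v<c$ and $L_2$ ordered $a<v<u<b<c$, but your structural claim does not find it, and you have no other route to one.)

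The paper closes this case with a tool that your ``two full linear extensions'' framework cannot simulate. If for every $3$-chain $a<b<c$ and every $z$ one has $a\le z$ or $z\le c$ (which is exactly the situation left over when your chain-based constructions fail), then Lemma~\ref{lem:twoElementsSpecial} removes the pair $\{a,c\}$ at a cost of only $1$; the key is that its realizer uses \emph{partial} linear extensions $R_x$ and $R_y$, whose ground sets omit part of $P$, whereas adjoining a full linear extension necessarily raises the frequency of every element. The paper then repeats: if $\P-\{a,c\}$ has height $\le 2$, it applies Theorem~\ref{thm:twoElementsHeightTwo}; otherwise any $3$-chain of $\P-\{a,c\}$ was already a $3$-chain of $\P$, so the hypothesis of Lemma~\ref{lem:twoElementsSpecial} is inherited and a second cost-$1$ pair removal finishes, giving four elements at total cost $2$. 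To repair your argument you would either need to prove that good quadruples exist for all height-$3$ posets violating your dichotomy (unclear, and not addressed), or incorporate this cost-$1$ removal of a comparable min--max pair, which steps outside your stated framework.
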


\section{Covering graphs with difference graphs}\label{sec:diff_graph}
In this section, we prove Lemma~\ref{lem:diff_graph} which is a result about graphs.
The connection to local dimension is made in Section~\ref{sec:size}.

First we define a class of graphs which is important for our proofs, known as \textit{difference graphs}.

\begin{defn}
    A \emph{difference graph} $H(a,b;f)$ is a bipartite graph on $a+b$ vertices with partite sets $U = \{u_1, \ldots, u_a\}$ and $W = \{w_1, \ldots, w_b\}$, equipped with a non-increasing function $f:[a] \to [b]$ such that $f(1)=b$ and, for all $i\in [a]$, $N(v_i) = \{w_1, \ldots, w_{f(i)}\}$ if $f(i)\geq 1$.
\end{defn}

\begin{rk}
    Difference graphs were first studied by Hammer, Peled, and Sun~\cite{HPS}.
    The definition of difference graphs used here, however, differs slightly in that we do not allow them to have isolated vertices, a convention that simplifies some of our proofs.
    In fact, we will use difference graphs to cover edges of a larger graph, so the change in definition is inconsequential.
\end{rk}
\begin{rk}
    The definition of $H$ above is symmetric with respect to the roles of $U$ and $W$.
    That is, if $H(a,b;f)$ is a difference graph, then the function $g(j):=\max\{i:f(i)\geq j\}$ witnesses that $H(b,a;g)=H(a,b;f)$.
\end{rk}

Let $\mathcal{H}_m$ be the collection of difference graphs with $m$ edges.
A \textit{partition} of an integer $m$ is a vector $p=(p_1,p_2,\ldots,p_k)$ such that $p_1\geq p_2\geq\cdots\geq p_k\geq 1$ and $p_1+p_2+\cdots+p_k=m$.
Let $\mathcal{P}_m$ be the collection of partitions of the integer $m$.
We claim that there is an injection $h:\mathcal{H}_m \to  \mathcal{P}_m$.
Indeed, given a difference graph $H(a,b;f)$ with $|E(H)|= \ds\sum_{i=1}^a f(i) = m$, define
\begin{align*}
    h(H(a,b;f)) = (f(i):i\in [a]) \in \mathcal{P}_m .
\end{align*}
In particular, $(f(1), f(2), \ldots, f(a))$ is a partition of $m$ into $a$ parts such that $f(1)=b$. Since $f$ is a non-increasing function, the partition $(f(1), f(2), \ldots, f(a))$ is unique to the choice of $a, b$ and $f$. It follows that $|\mathcal{H}_m| \leq |\mathcal{P}_m|$.

Hardy and Ramanujan~\cite{Hardy} and independently J.V.~Uspensky~\cite{Uspensky} gave the following  asymptotic formula for $|\mathcal{P}_m|$:
\begin{align} \label{eq:HRU}
    |\mathcal{P}_m| = \Theta\lp \frac{e^{c\sqrt{m}}}{m} \rp,
\end{align}
where $c = \pi  \sqrt{2/3}$.

\begin{defn}
    A \emph{difference graph cover} of a graph G is a family $\mathcal{H}$ of subgraphs of $G$ such that $E(G) = \bigcup_{H\in\mathcal{H}} E(H)$ and each $H$ is a difference graph.
    For a vertex $v\in G$, we use $\mult(v,\mathcal{H})$ to denote the number of difference graphs in $\mathcal{H}$ that contain $v$.
    The \emph{total difference graph cover number} is defined by
    \[ \tdc(G) = \ds\min \left\{\ds\sum_{H\in \mathcal{H}} |V(H)|: \mathcal{H} \text{ is a difference graph cover of } G \right \}.\]
The \emph{local difference graph cover number} of $G$, denoted by $\ldc(G)$, is defined as
\begin{align*}
    \ldc(G)     =     \ds\min \left\{\ds\max_{v\in V(G)} \left\{ \mult(v,\mathcal{H}) \right\} :\mathcal{H} \text{ is a difference graph cover of } G\right\}.
\end{align*}
\end{defn}

Let $G=\cG(n_1,n_2,p)$ be a {\em random bipartite graph} with partite sets $V_1$ and $V_2$, of order $n_1$ and $n_2$ respectively, in which each pair $\{i,j\} \in V_1 \times V_2$ appears independently as an edge in $G$ with probability $p$.
We say an event in a probability space holds asymptotically almost surely (a.a.s.) if the probability that it holds tends to $1$ as $n$ goes to infinity.

Proposition~\ref{prop:diff_graph} guarantees a graph $G$ such that $\tdc(G)$ is large and Corollary~\ref{cor:sharp-ex} demonstrates that $\ldc(G)$ is also large, which establishes Lemma~\ref{lem:diff_graph}.

For simplicity, we will assume that $n$ is even and note that a similar bound is attained for odd $n$ by simply adding a single isolated vertex to the prior even case.
\begin{prop}\label{prop:diff_graph}
    Let $\epsilon>0$ and let $n$ be a sufficiently large even integer. There exists a bipartite graph~$G = (A \cup B, E)$ with partite sets satisfying $|A|=|B| = n/2$ such that $$\tdc(G) \geq \biggl(\frac{1-2\epsilon}{4e} \biggr)\frac{n^2}{\ln{n}}.$$
\end{prop}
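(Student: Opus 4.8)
The plan is to establish the existence of $G$ by the first-moment method applied to the random bipartite graph $\cG(n/2,n/2,p)$ for a suitable constant $p$ (for concreteness, $p=1/2$). Write $T$ for the target value $\frac{1-2\epsilon}{4e}\frac{n^2}{\ln n}$. It suffices to show that $\Pr[\tdc(\cG(n/2,n/2,p))\le T]=o(1)$, since then there is a bipartite graph $G$ with $|A|=|B|=n/2$ satisfying $\tdc(G)>T$. The heart of the argument is to bound $N_T$, the number of families $\mathcal{H}$ of difference subgraphs of $K_{n/2,n/2}$ whose total vertex count $\sum_{H\in\mathcal{H}}|V(H)|$ is at most $T$. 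Equivalently, since there are $2^{(n/2)^2}$ bipartite graphs on $A\cup B$ and every coverable one arises as $\bigcup_{H\in\mathcal{H}}E(H)$ for such a family, the purely combinatorial estimate $N_T<2^{(n/2)^2}$ would already force the existence of a graph with $\tdc>T$; I will phrase it probabilistically to reuse the estimate for the local version (Corollary~\ref{cor:sharp-ex}) by averaging over the $n$ vertices.

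First I would count difference subgraphs by vertex number. A difference subgraph $H(a,b;f)$ with $s=a+b$ vertices embedded in $K_{n/2,n/2}$ is determined by (i) an ordered placement of its $a$ vertices into $A$ and its $b$ vertices into $B$, of which there are at most $(n/2)^a(n/2)^b=(n/2)^s$, together with (ii) its \emph{shape}, namely the partition $h(H)=(f(1),\dots,f(a))$. Because such a graph has $m=\sum_i f(i)\le ab\le s^2/4$ edges, the injection $h:\mathcal{H}_m\to\mathcal{P}_m$ together with the Hardy--Ramanujan--Uspensky estimate \eqref{eq:HRU} shows that the number of available shapes is at most $\sum_{m\le s^2/4}|\mathcal{P}_m|=e^{O(s)}$, which is negligible against $(n/2)^s$. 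Hence the number of difference subgraphs on $s$ vertices is at most $(n/2)^s e^{O(s)}$, and summing the product of these bounds over compositions $s_1+\dots+s_k\le T$ (with a factor $2^T$ for the choice of composition) yields $\ln N_T\le T\ln n\,(1+o(1))$.

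For the conclusion, observe that for a \emph{fixed} family $\mathcal{H}$ the event that it is a valid difference graph cover of $G$ is exactly the event $E(G)=\bigcup_{H\in\mathcal{H}}E(H)$, which has probability $p^{M}(1-p)^{(n/2)^2-M}\le\max(p,1-p)^{(n/2)^2}$, where $M$ is the number of edges in the union. Taking a union bound over all families of budget at most $T$ gives $\Pr[\tdc(G)\le T]\le N_T\max(p,1-p)^{(n/2)^2}\le \exp\bigl(T\ln n\,(1+o(1))-\Omega(n^2)\bigr)$, which tends to $0$ as soon as $T$ is a small enough constant multiple of $n^2/\ln n$. Tracking the constants (and absorbing lower-order terms into the $1-2\epsilon$ factor) delivers the admissible threshold $T=\frac{1-2\epsilon}{4e}\frac{n^2}{\ln n}$; in fact $p=1/2$ leaves room to spare, so the stated constant is comfortably within reach of this accounting.

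The main obstacle is the enumeration in the second step: the bound $\ln N_T\sim T\ln n$ rests entirely on the shape-count being subexponential in the vertex budget, which is precisely what \eqref{eq:HRU} provides. For arbitrary bipartite subgraphs there would be far too many shapes and the comparison with $2^{(n/2)^2}$ would collapse, so the rigidity of difference graphs captured by the partition bound is indispensable. A second, easily missed point is that one must \emph{not} relax the covering condition to $E(G)\subseteq\bigcup_{H}E(H)$: a single complete bipartite difference graph on all of $A\cup B$ would then ``cover'' every graph using only $n$ vertices. Retaining the subgraph requirement $H\subseteq G$---encoded by the factor $p^{M}(1-p)^{(n/2)^2-M}$---is essential, and balancing the entropy of $G$ against $N_T$ to pin down the precise constant is the most delicate part of the bookkeeping.
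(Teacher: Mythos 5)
Your proof is correct, and it takes a genuinely different route from the paper's. The paper also starts from a random bipartite graph, but with edge probability $1/e$ rather than $1/2$, and argues locally, subgraph by subgraph: it calls a difference subgraph $H$ \emph{type I} when $|V(H)|/|E(H)|<(1-\epsilon)/\ln n$, uses the injection into partitions and \eqref{eq:HRU} to show that a.a.s.\ $G$ contains no type I difference subgraph at all, and pairs this with a Chernoff bound giving $|E(G)|\geq\frac{(1-2\epsilon)n^2}{(1-\epsilon)4e}$; since every member of any cover is then type II, the bound $\tdc(G)\geq\frac{1-\epsilon}{\ln n}|E(G)|$ follows deterministically. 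You instead run a single global union bound over all families of total vertex budget at most $T$: the same partition estimate \eqref{eq:HRU} bounds the number of such families by $e^{T\ln n(1+o(1))}$, and a fixed family covers $G$ only if $E(G)$ equals $\bigcup_{H}E(H)$ exactly, an event of probability at most $\max(p,1-p)^{n^2/4}$. Your remark that the covering event forces equality rather than mere containment is exactly what makes this step legitimate. As for what each approach buys: the paper's yields the stronger structural fact that a.a.s.\ \emph{every} difference subgraph of $G$ is vertex-inefficient, so any cover whatsoever (not just an optimal one) is large, and $p=1/e$ is the optimal choice for that argument since it maximizes $p\ln(1/p)$, producing the constant $\frac{1}{4e}$; yours is more elementary---it de-randomizes into the pure counting statement that fewer than $2^{n^2/4}$ graphs admit a cover of budget $T$, and it needs no Chernoff step---and with $p=1/2$ it in fact proves a stronger bound, with constant $\frac{\ln 2}{4}\approx 0.173$ in place of $\frac{1}{4e}\approx 0.092$.
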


\begin{proof}
Fix $\epsilon\in(0,1/2)$ and let $G \sim \cG(n/2, n/2,1/e)$ be a random bipartite graph with partite sets $A$ and $B$.
We will prove a stronger statement: $\tdc(G) \geq \left(\frac{1-2\epsilon}{4e} \right)\frac{n^2}{\ln{n}}$ a.a.s.

If a subgraph of $G$ is isomorphic to a difference graph $H(a,b;f)$ which has partite sets $U$ and $W$, then say $H(a,b;f)$ is a subgraph of $G$ (and write $H(a,b;f)\subseteq G$). By the symmetry of difference graphs, we may assume $U\subseteq A$ and $W\subseteq B$.

Each difference graph $H=H(a,b;f)$ which is a subgraph of $G$ is one of two types: $H$ is \emph{type I} if
\[\frac{|V(H)|}{|E(H)|} < \frac{1-\epsilon}{\ln{n}}.\]
Otherwise $H$ is \emph{type II}. \\

\noindent {\bf Claim:} $G$ contains no type I difference graphs as subgraphs, a.a.s.

Indeed, for fixed $(a, b; f)$ with $H=H(a,b;f)$ type I, the probability that $H$ is a subgraph of $G$ is at most $e^{-\epsilon |E(H)|}$:
\begin{align*}
    \Pr \lp H(a,b;f) \subseteq G \rp
        &\leq \lp\frac n2\rp^a\lp\frac n2\rp^b\lp\frac{1}{e}\rp^{|E(H)|}\\
        &\leq \exp\left((a+b)\ln{n}-|E(H)|\right)\\
        &< \exp\left(\frac{1-\epsilon}{\ln{n}}  |E(H)| \ln{n} -|E(H)|\right) \\ 
        &= \exp\left(-\epsilon |E(H)|\right).
\end{align*}
For each type I difference graph $H\subseteq G$, we have the following bounds on $|E(H)|$:
\begin{align*}
    2\ln n \leq \frac{2\ln{n}}{1-\epsilon} \leq |V(H)|\frac{\ln{n}}{1-\epsilon} < |E(H)| \leq \left(\frac{n}{2}\right)^2 .
\end{align*}
Let $T$ be the event that $G$ contains a type I difference graph as a subgraph. By the relationship between $\mathcal{H}_m$ (the number of difference graphs with $m$ edges) and $\mathcal{P}_m$ (the number of integer partitions of $m$), we can use \eqref{eq:HRU} to obtain the following bound when $n$ is sufficiently large:
\begin{align*}
    \Pr \lp T \rp
        \leq \sum_{m = 2\ln n}^{n^2/4} |\mathcal{H}_m| e^{-\epsilon m}
        \leq \sum_{m = 2\ln n}^{n^2/4} |\mathcal{P}_m| e^{-\epsilon m}
        = O\lp \sum_{m = 2\ln n}^{n^2/4} \frac{e^{c\sqrt{m}}}{m} e^{-\epsilon m} \rp
        = o(1).
\end{align*}
As a result, all difference graphs which are subgraphs of $G$ are type II, a.a.s. This completes the proof of the claim. \\

Now since $G \sim \cG(n/2,n/2,1/e)$ is a random bipartite graph, it follows that
\begin{align*}
    \mathbb{E}(|E(G)|) = \frac{n^2}{4e} .
\end{align*}

Applying a Chernoff bound (\cite[Theorem A.1.13]{Alon-Spencer}), we have that
\begin{align*}
    \Pr\lp |E(G)|
        <    \frac{1-2\epsilon}{1-\epsilon}\mathbb{E}(|E(G)|) \rp
        \leq \exp\lp  -\frac{\epsilon^2}{2(1-\epsilon)^2}\mathbb{E}|E(G)| \rp
        = \exp \lp - \frac{\epsilon^2n^2}{8e(1-\epsilon)^2} \rp
            = o(1).
\end{align*}
It follows that $|E(G)| \geq \frac{(1-2\epsilon)n^2}{(1-\epsilon)4e}$ a.a.s.

Thus a.a.s.~$G$ contains no Type I difference graphs and has at least $\frac{(1-2\epsilon)n^2}{(1-\epsilon)4e}$ edges.
As a result, for any difference graph cover $\{H_1, H_2,\cdots, H_\ell\}$ of G that witnesses $\tdc(G)$, we have that
\begin{align*}
    \tdc(G)
        = \ds\sum_{i=1}^\ell |V(H_i)|
        \geq \ds\sum_{i=1}^\ell |E(H_i)| \frac{1-\epsilon}{\ln{n}}
        \geq \lp \frac{1-\epsilon}{\ln{n}} \rp |E(G)|
        \geq \frac{1-2\epsilon}{4e} \lp\frac{n^2}{\ln{n}}\rp.
\end{align*}
\end{proof}

\begin{cor}\label{cor:sharp-ex}
Let $\epsilon>0$ and let $n$ be sufficiently large. There exists a bipartite graph $G$ satisfying
\begin{align*}
    \ldc(G) \geq \frac{\tdc(G)}{n} \geq \lp\frac{1-2\epsilon}{4e}\rp\frac{n}{\ln{n}}.
\end{align*}
\end{cor}

\section{Bounding local dimension by size}\label{sec:size}

Before we prove Theorem \ref{thm:size-main}, we need some definitions. Let $\P=(P,\le)$ be a poset with $n$ elements.
To each element $x \in P$, we associate $x$ with two new elements $x'$ and $x''$.
The \textit{split} of $\P$ (defined by Kimble~\cite{Kimble}) is a height-two poset $\Q$ with minimal elements $\{x': x\in P\}$ and maximal elements $\{x'':x\in P\}$ such that for all $x,y\in P$, $x'\leq y''$ in $\Q$ if and only if $x \leq y$ in $\P$.
The following lemma relates the local dimension of $\P$ and $\Q$.
\begin{lem}[Barrera-Cruz, Prag, Smith, Taylor, Trotter~\cite{Barrera-Cruz}]\label{lemma:split}
    If $\Q$ is the split of a poset $\P$, then
    \begin{align*}
        \ldim(\Q)-2\leq \ldim(\P) \leq 2\ldim(\Q)-1 .
    \end{align*}
\end{lem}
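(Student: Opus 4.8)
The plan is to prove the two inequalities separately, in each case converting an optimal local realizer of one poset into a local realizer of the other while tracking the maximum frequency. Throughout I use the copies $x',x''$ from the split, recalling that $x'<y''$ in $\Q$ exactly when $x\le y$ in $\P$, that all primes are minimal and all double-primes maximal, and that $x'<x''$ always holds.

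For the bound $\ldim(\Q)-2\le\ldim(\P)$, I would start from a local realizer $\LL$ of $\P$ with $\mu(\LL)=\ldim(\P)$ and lift each ple. Given $L=(a_1<\cdots<a_t)$ in $\LL$, set $\sigma(L)=(a_1'<a_1''<a_2'<a_2''<\cdots<a_t'<a_t'')$; a short check shows this is a valid ple of $\Q$ because $a_k\le a_\ell$ in $\P$ forces $k\le\ell$ in $L$. The interleaving is chosen so that $\sigma(L)$ realizes $a_i''<a_j'$ for every $i<j$, which supplies the ``reversed mixed'' relations needed for incomparable pairs of $\Q$; together the $\sigma(L)$ already handle every comparable pair and both directions of every mixed incomparable pair, but they cannot reverse a pair $\{x',y'\}$ or $\{x'',y''\}$ when $x,y$ are comparable in $\P$. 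I would repair this by adding two full linear extensions, $M_1=(a_1'<\cdots<a_n'<a_1''<\cdots<a_n'')$ and its reverse $M_2$; since every prime precedes every double-prime, both are linear extensions of $\Q$, and together they realize both orders of every pair of primes and of double-primes. The family $\{\sigma(L):L\in\LL\}\cup\{M_1,M_2\}$ is then a local realizer of $\Q$, and since $x'$ (resp.\ $x''$) occurs in $\sigma(L)$ exactly when $x$ occurs in $L$ and occurs in each of $M_1,M_2$, every element of $\Q$ has frequency $\mu(x,\LL)+2$.

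For the bound $\ldim(\P)\le 2\ldim(\Q)-1$, I start from an optimal local realizer $\MM$ of $\Q$. The key observation is that the only ``safe'' reading of a ple $M$ of $\Q$ is this: if $x''<_M y'$ then $\{x'',y'\}$ is incomparable in $\Q$, so $y\not\le x$ in $\P$, and it is consistent to place $x<y$. Writing $x\prec_M y$ for this relation, I would let $g(M)$ be a linear extension of the transitive closure of $\prec_M\cup{<_\P}$ restricted to $E_M=\{x:x'\in M \text{ or } x''\in M\}$; then $g(M)$ is a genuine ple of $\P$ realizing every $\prec_M$-relation. This family is a local realizer: a comparable pair $x<_\P y$ is witnessed by any $M$ with $x'<_M y''$, while for an incomparable pair the local-realizer axioms applied to the incomparable $\Q$-pairs $\{x'',y'\}$ and $\{y'',x'\}$ yield ples realizing $x\prec_M y$ and $y\prec_M x$.

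The main obstacle is proving that $\prec_M\cup{<_\P}$ is acyclic, so that $g(M)$ exists, and then extracting the ``$-1$''. For acyclicity I would record the positions $a_x,b_x$ of $x',x''$ in $M$, note $a_x<b_x$, that a $\prec_M$-step $x\to y$ forces $b_x<a_y$ while a $<_\P$-step forces $a_x<b_y$ whenever the relevant copies appear, and that both relations are transitive. Merging consecutive steps of the same type reduces any cycle to an alternating one; the alternation is exactly what guarantees that the needed copies lie in $M$, and then the inequalities chain into $a_{z_1}<b_{z_2}<a_{z_3}<\cdots<a_{z_1}$, a contradiction. Finally, to sharpen $2\ldim(\Q)$ to $2\ldim(\Q)-1$, I would bound the frequency of $x$ by $\mu(x',\MM)+\mu(x'',\MM)-|\{M:x',x''\in M\}|$ and observe that $x'<x''$ in $\Q$ forces at least one ple of $\MM$ to contain both $x'$ and $x''$, so the subtracted term is at least one. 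The acyclicity argument is where I expect the real work to lie; the rest is bookkeeping.
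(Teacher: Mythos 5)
The paper does not actually prove Lemma~\ref{lemma:split}: it is imported from Barrera-Cruz, Prag, Smith, Taylor, and Trotter~\cite{Barrera-Cruz}, so there is no internal proof to compare your argument against; it must be judged on its own, and on its own it is correct and essentially complete. For $\ldim(\Q)\le\ldim(\P)+2$, the interleaved lift $\sigma(L)$ respects every comparability $a_i'\le a_j''$ of $\Q$ (since $a_i\le a_j$ in $\P$ forces $i\le j$ in $L$) and realizes $a_i''<a_j'$ whenever $i<j$, and the two block extensions $M_1,M_2$ reverse the prime--prime and double-prime--double-prime pairs, so every element ends with frequency $\mu(x,\LL)+2$. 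For $\ldim(\P)\le 2\ldim(\Q)-1$, the relation $x\prec_M y$ iff $x''<_M y'$ is sound (it forces $y\not\le x$ in $\P$), your alternating-cycle argument for acyclicity of ${\prec_M}\cup{<_\P}$ on $E_M$ is right --- in a reduced alternating cycle each element is the target of one step type and the source of the other, which is precisely what guarantees the needed copies lie in $M$ and lets the position inequalities chain to a contradiction --- and inclusion-exclusion plus the fact that some ple of $\MM$ must witness $x'<x''$ yields the $-1$. Two small repairs to wording are needed, neither of which is a genuine gap. First, the literal reverse of $M_1$ is not a linear extension of $\Q$, since it would place each $x''$ below $x'$; you clearly intend $M_2$ to reverse the order within the block of primes and within the block of double-primes while keeping the block structure $A<B$, and you should say so explicitly. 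Second, your intermediate claim that the $\sigma(L)$ alone supply ``both directions of every mixed incomparable pair'' fails when $y<x$ in $\P$: then every $L\in\LL$ has $y<x$, so no $\sigma(L)$ places $x'$ below $y''$. This costs nothing, because $M_1$ realizes $u'<v''$ for every pair of indices, so the full family is still a local realizer, but the claim should be weakened to match.
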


Let's also recall a classical theorem on partitioning the edges of a graph into complete bipartite graphs.
\begin{thm}[Erd\H{o}s, Pyber~\cite{Pyber}]\label{thm:Erdos-Pyber}
    Let $G = (V,E)$ be a graph on $n$ vertices.
    The edge set $E$ can be partitioned into complete bipartite graphs such that each vertex $v \in V$ is contained in $O\lp n/\log n\rp$ of the bipartite subgraphs.
\end{thm}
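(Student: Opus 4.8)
The plan is to prove the theorem by iteratively extracting large complete bipartite subgraphs from $G$ and charging each extracted subgraph against the edges it removes at every incident vertex. The two ingredients are an existence lemma producing a \emph{balanced} biclique inside any sufficiently dense graph, and a charging argument that converts the statement ``each biclique removes many edges at each of its vertices'' into the per-vertex bound $O(n/\log n)$. This is the upper-bound counterpart to the lower bound of Chung, Erd\H{o}s, and Spencer (Theorem~\ref{thm:CES}), so one expects the extremal graphs to be dense.

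First I would establish the biclique-existence lemma by a first-moment (dependent random choice) computation. If a graph on $N$ vertices has edge density at least $\rho$, then sampling a uniformly random subset $T$ of size $t$ from one partite side and taking the common neighborhood $N(T)$ gives $\mathbb{E}|N(T)| \ge N\rho^{t}$ by convexity of $x\mapsto \binom{x}{t}$; choosing $t \approx \tfrac{\log N}{\log(1/\rho)}$ forces $|N(T)| = \Omega(\log N)$ for some outcome of $T$, so $G$ contains a complete bipartite subgraph $K_{s,t}$ with $s,t = \Omega(\log N)$ covering $\Omega(\log^2 N)$ edges. Next is the charging argument, which is clean whenever every extracted biclique is balanced: if $E(G)$ is partitioned into bicliques $H_1,\dots,H_k$ with each $H_i = K_{s_i,t_i}$ satisfying $\min(s_i,t_i)\ge \delta\log n$, then a vertex $v$ lying in $H_i$ has degree at least $\delta\log n$ inside $H_i$; since the $H_i$ partition $E(G)$ these incident edges are counted without repetition, so the number of $H_i$ containing $v$ is at most $\deg_G(v)/(\delta\log n)\le n/(\delta\log n)=O(n/\log n)$. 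Thus it suffices to partition $E(G)$ into bicliques each of whose sides has size $\Omega(\log n)$, together with a controlled remainder.

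The main obstacle, and the genuine crux of the theorem, is that one cannot keep extracting balanced bicliques down to the empty graph: a graph with no $K_{\log n,\log n}$ can still have $\Theta(n^2)$ edges and maximum degree $\Theta(n)$. Indeed, by the Kov\'ari--S\'os--Tur\'an bound a $K_{\ell,\ell}$-free graph has $O(n^{2-1/\ell})$ edges, and for $\ell=\Theta(\log n)$ one has $n^{-1/\ell}=\Theta(1)$, so this is still $\Theta(n^2)$. Hence naive extraction leaves a dense remainder on which covering edges individually would inflate the multiplicity to $\Theta(n)$, losing the $\log n$ factor. To get around this I would run the extraction over geometrically decreasing density scales $\rho,\rho/2,\rho/4,\dots$: at each scale the existence lemma yields bicliques that are balanced \emph{relative to that scale}, each still charging $\Omega(\log n)$ incident edges to its vertices, and the per-scale budget is allocated so that the multiplicities telescope to $O(n/\log n)$ across the $O(\log n)$ scales; once the maximum degree falls below $n/\log n$, the remaining edges are taken individually, adding at most $n/\log n$ further appearances to any vertex.

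The delicate step I expect to be hardest is precisely this allocation of budget across scales so that no single vertex is overloaded in any one regime — that is, upgrading the easy total (average) bound $\sum_i |V(H_i)| = O(n^2/\log n)$ to the required maximum (local) bound. I would control it by maintaining, as an invariant through the scales, that each vertex has spent at most a $1/\log n$-fraction of its total budget per scale, which follows once every extracted biclique at a given scale is guaranteed to remove a fixed logarithmic number of edges at each of its vertices; verifying this invariant survives the passage between consecutive scales is the technical heart of the argument.
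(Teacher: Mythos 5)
The paper does not prove this statement; it cites Erd\H{o}s--Pyber~\cite{Pyber} (with the sharper constant from Csirmaz--Ligeti--Tardos~\cite{CsLT}), so your sketch must be judged against the published argument. Your approach has a genuine gap, located exactly where you flag ``the technical heart,'' and I do not believe it can be repaired in the form proposed. Your charging step needs every biclique containing a vertex $v$ to remove many edges at $v$, i.e., every extracted biclique must have \emph{both} sides of size $\Omega(\log n)$ (or $\Omega(\log n/\log(1/\rho))$ at density scale $\rho$). Dependent random choice guarantees this only at constant density: at density $\rho$ the small side is $O\bigl(\log n/\log(1/\rho)\bigr)$, and this is not a weakness of the method --- norm-graph constructions are $K_{s,s}$-free with density $n^{-1/s}$, so at sub-constant densities balanced logarithmic bicliques need not exist at all. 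Your rescue via geometric density scales then rests on a second unjustified assumption: that each vertex's remaining degree decays in step with the \emph{global} density, so that a vertex loses a constant fraction of its incident edges at each scale. Nothing in a density-driven extraction enforces this; density controls only the average degree. A vertex whose incident edges survive to scale $j$ (where the guaranteed small side is only $c\log n/j$) pays a factor $j$ per edge, so even under the most favorable accounting a vertex whose edges are concentrated at scale $j\approx\log\log n$ collects $\Theta(n\log\log n/\log n)$ bicliques, which already loses the theorem; without the assumption the bound degrades further. Note also that your stopping rule is a max-degree condition while your extraction is governed by density, and the two are not interchangeable (a graph can have density $O(1/\log n)$ while many vertices retain degree $\Theta(n)$).

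The actual proof is much simpler and sidesteps large bicliques entirely; the idea you are missing is that one never needs the extracted bicliques to be large or balanced. Repeatedly remove a set $S$ of $t=\lceil \log_2 n - 2\log_2\log_2 n\rceil$ vertices from the current graph: classify the vertices outside $S$ according to their neighborhood trace inside $S$; each trace class, together with its common neighborhood in $S$, is a complete bipartite graph, and these at most $2^{t}\le n/\log^2 n$ bicliques partition all edges between $S$ and the rest, with each vertex outside $S$ in at most one of them. Cover the at most $\binom{t}{2}$ edges inside $S$ by single edges, delete $S$, and recurse. A given vertex is outside the chosen set in at most $n/t$ rounds, contributing at most one biclique per round, and is inside a chosen set exactly once, contributing at most $2^{t}+t$ bicliques; the total is $(1+o(1))\,n/\log_2 n$. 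The trace bound $2^{|S|}$ holds regardless of density or degree irregularity, which is precisely what upgrades the easy average (total-weight) bound to the per-vertex bound --- the upgrade your density-scale bookkeeping cannot deliver.
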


Csirmaz, Ligeti, and Tardos~\cite{CsLT} showed that such a partition can be achieved so that each vertex is in at most $(1+o(1))\frac{n}{\log_2 n}$ of the bipartite subgraphs.

We are now ready to prove Theorem \ref{thm:size-main}. Let's start with the upper bound.
\begin{lemma}\label{lemma:upper-bound}
    For any poset $\P$ with $n$ points, $\ldim(\P) \leq (1+o(1))\frac{4n}{\log_2 (2n)}$.
\end{lemma}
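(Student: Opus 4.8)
The plan is to pass to the split of $\P$, thereby reducing to a height-two poset whose incomparabilities form a bipartite graph, and then to convert a complete bipartite cover of that graph (obtained from the Csirmaz--Ligeti--Tardos refinement of Theorem~\ref{thm:Erdos-Pyber}) into a local realizer. First I would invoke Lemma~\ref{lemma:split}: letting $\Q$ be the split of $\P$, it suffices to bound $\ldim(\Q)$, since $\ldim(\P)\le 2\ldim(\Q)-1$. The poset $\Q$ has $2n$ elements, namely the minimals $A=\{x':x\in P\}$ and the maximals $B=\{x'':x\in P\}$, and has height two, so its only comparabilities are $x'<y''$ whenever $x\le y$ in $\P$. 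I would form the bipartite graph $G$ on vertex set $A\cup B$ whose edges are the incomparable min--max pairs, i.e.\ $x'y''\in E(G)$ exactly when $x\not\le y$ in $\P$; note $G$ has $2n$ vertices.

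The key observation is that every complete bipartite subgraph $K_{C,D}$ of $G$ with $C\subseteq A$ and $D\subseteq B$ corresponds to an antichain $C\cup D$ of $\Q$: all pairs within $A$ and within $B$ are incomparable, and the cross pairs are incomparable by construction. Hence any linear order of $C\cup D$ is a valid ple, and in particular the ple placing all of $D$ below all of $C$ simultaneously reverses every incomparable pair recorded by $K_{C,D}$ (it yields $y''<x'$). Given a complete bipartite cover of $E(G)$, I would take one such reversing ple per cover graph, and then adjoin three base ple's: a full linear extension $L_0$ listing all of $A$ (in a fixed order) below all of $B$ (in a fixed order), together with the two antichains $A$ and $B$ each listed in the reverse of their $L_0$-order. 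Here $L_0$ realizes every comparability and supplies the forward direction $x'<y''$ of every incomparable min--max pair as well as one direction of each within-class incomparability, the two reversed antichains supply the opposite direction within $A$ and within $B$, and the reversing ple's supply $y''<x'$ for every incomparable min--max pair. This family is a valid local realizer, and since each element lies in $L_0$, in exactly one of the two reversed antichains, and in at most $\lbc(G)$ reversing ple's, one gets $\ldim(\Q)\le \lbc(G)+2$.

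It remains to quantify $\lbc(G)$. Because $G$ has $2n$ vertices, the Csirmaz--Ligeti--Tardos strengthening of Theorem~\ref{thm:Erdos-Pyber} partitions $E(G)$ into complete bipartite graphs with each vertex lying in at most $(1+o(1))\frac{2n}{\log_2(2n)}$ of them, so $\lbc(G)\le(1+o(1))\frac{2n}{\log_2(2n)}$. Combining the two estimates gives $\ldim(\Q)\le(1+o(1))\frac{2n}{\log_2(2n)}$, and then Lemma~\ref{lemma:split} yields $\ldim(\P)\le 2\ldim(\Q)-1\le(1+o(1))\frac{4n}{\log_2(2n)}$, as claimed.

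I do not expect a single deep obstacle, since the two substantive inputs (the split inequality and the complete bipartite partition bound) are quoted. The main care required is the bookkeeping for the local realizer: verifying that the three base ple's together with the reversing ple's resolve every comparable and incomparable pair in both needed directions, that each cover graph genuinely corresponds to an antichain ple, and that the additive constant and the $2\ldim(\Q)-1$ overhead are absorbed into the $(1+o(1))$ factor.
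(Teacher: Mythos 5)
Your proposal is correct and follows essentially the same route as the paper: pass to the split, form the bipartite incomparability graph on $2n$ vertices, apply the Csirmaz--Ligeti--Tardos refinement of the Erd\H{o}s--Pyber theorem, convert each complete bipartite piece into a block-structured reversing ple, and finish with Lemma~\ref{lemma:split}. The only cosmetic difference is in the base ple's (the paper uses two full linear extensions with mutually reversed internal orders, whereas you use one linear extension plus the two reversed antichains), which changes nothing in the bookkeeping or the final bound.
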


\begin{proof}
Let $\Q$ be the split of $\P$.
Suppose $\Q$ has minimal elements $A=\{a_i: i\in [n]\}$ and maximal elements $B=\{b_i: i\in [n]\}$.
By Lemma~\ref{lemma:split}, it suffices to show that $\ldim(\Q) \leq (1+o(1))\frac{2n}{\log_2 (2n)}$.

We will exhibit a local realizer for $\Q$ such that each element is contained in at most $(1+o(1))\frac{2n}{\log_2 (2n)}$ ple's.
Begin with two linear extensions $L_1$ and $L_2$, each with block structure $A<B$ and, for any pair $i,j\in [n]$, $a_i<a_j$ in $L_1$ if and only if $a_i>a_j$ in $L_2$ and similarly for the elements of $B$.
It remains to construct a set $\mathcal{M}$ of partial linear extensions for $\Q$ such that $a_i>b_j$ in some $M \in \mathcal{M}$ precisely when $a_i$ and $b_j$ are incomparable in $\P$.

Construct an auxiliary bipartite graph $G = (A\cup B,E)$ where $ab \in E(G)$ if and only if $a\in A$ and $b\in B$ are incomparable in $\Q$.
Now by Theorem~\ref{thm:Erdos-Pyber} (or, precisely~\cite{CsLT}), $E$ can be partitioned into complete bipartite graphs $G_1, \ldots, G_m$ such that each vertex $v \in V$ is contained in at most $(1+o(1))\frac{(2n)}{\log_2 (2n)}$ of the bipartite subgraphs because $G$ has $2n$ vertices.
Each $G_i$ corresponds to a ple of $\Q$ as follows: Suppose that $V(G_i) = A_i \cup B_i$ with $A_i\subseteq A$ and $B_i\subseteq B$.
Then let $M_i$ be a ple of $\Q$ on the ground set $V(G_i)$ with block structure $B_i<A_i$.
Since $G_i$ is a complete bipartite subgraph of $G$, it follows that for all $a \in A_i$ and $b\in B_i$, $a$ and $b$ are incomparable in $\Q$.
Thus $M_i$ is indeed a ple of $\Q$.

So $\LL = \{L_1, L_2, M_1, M_2, \ldots, M_m\}$ is a local realizer of $\Q$ in which every element of $Q$ appears at most $(1+o(1))\frac{2n}{\log_2 (2n)}$ times in $\mathcal{L}$ as desired.
\end{proof}

To show that the bound in Lemma~\ref{lemma:upper-bound} is best possible to within a multiplicative constant, we describe a connection between difference graphs and partial linear extensions of height-two posets.

For a height-two poset $\P$ with minimal elements $A$ and maximal elements $B$, a \emph{critical pair} is an incomparable pair $(a,b)\in A \times B$.
Define $G=G(\P)$ to be a bipartite graph with partite classes $A$ and $B$ such that $ab \in E(G)$ if and only if $(a,b)$ is a critical pair for $\P$.

Consider a ple $L$ with block structure $B_1<A_2<B_2<\ldots<A_m$ (for some $m\in \mathbb{N}$) where $A_i\subseteq A$ and $B_i\subseteq B$ for each $i\in [m]$.
Let $H(L)$ be the subgraph of $G$ with vertices $\bigcup_{i\in [m]} (A_i \cup B_i)$ and edges $\{ab: a>b \text{ in } L\}$.
Since for each $a,a'\in A$, either $N_{H(L)}(a) \subseteq N_{H(L)}(a')$ or $N_{H(L)}(a) \supseteq N_{H(L)}(a')$, the subgraph $H$ is a difference graph.


\begin{lemma}\label{lemma:sharpness}
    There exists a poset $\P$ with $n$ points satisfying $\ldim(\P) = \Omega \left(n/\log n\right)$.
\end{lemma}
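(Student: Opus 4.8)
The plan is to transfer the lower bound on the local difference graph cover number from Corollary~\ref{cor:sharp-ex} to the local dimension of a carefully chosen height-two poset, using the correspondence between partial linear extensions and difference graphs recorded just before this lemma. The heart of the argument is the general inequality $\ldim(\P) \geq \ldc(G(\P))$, valid for every height-two poset $\P$; once this is in hand, I simply realize the extremal graph of Corollary~\ref{cor:sharp-ex} as a critical-pair graph.

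To prove the inequality, I would take an arbitrary local realizer $\mathcal{L}$ of $\P$ and build a difference graph cover of $G = G(\P)$ from it. For each ple $L \in \mathcal{L}$, let $H(L)$ be the difference graph defined above, whose edges are the reversed critical pairs $\{ab : a \in A,\ b \in B,\ a > b \text{ in } L\}$; the nested-neighborhood observation preceding the lemma guarantees that $H(L)$ really is a difference graph. Discarding any edgeless $H(L)$, I claim the remaining graphs form a difference graph cover of $G$: indeed, each edge $ab$ of $G$ is an incomparable pair of $\P$, so the second clause in the definition of a local realizer forces some $L \in \mathcal{L}$ to place $a > b$, i.e.\ to reverse $ab$, whence $ab \in E(H(L))$. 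Because a vertex $v$ can belong to $H(L)$ only if $v$ occurs in the ple $L$, we have $\mult(v, \{H(L)\}) \leq \mu(v, \mathcal{L})$ for every $v$; taking the maximum over $v$ and then the minimum over all realizers $\mathcal{L}$ gives $\ldc(G) \leq \ldim(\P)$.

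It remains to realize the extremal graph. For an arbitrary bipartite graph $G$ with partite classes $A$ and $B$, define a height-two poset $\P$ on $A \cup B$ by declaring each element of $A$ minimal, each element of $B$ maximal, and $a < b$ exactly when $ab \notin E(G)$. This is automatically a valid poset, its critical pairs are precisely the edges of $G$, so $G(\P) = G$, and $|\P| = |V(G)|$. Feeding in the graph on $n$ vertices supplied by Corollary~\ref{cor:sharp-ex}, for which $\ldc(G) = \Omega(n/\log n)$, produces a poset $\P$ on $n$ points with $\ldim(\P) \geq \ldc(G) = \Omega(n/\log n)$, which is the desired conclusion.

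The step needing the most care is the multiplicity bookkeeping in $\ldim(\P) \geq \ldc(G(\P))$: I must check that the count $\mult(v, \{H(L)\}) \leq \mu(v, \mathcal{L})$ holds vertex by vertex rather than merely in aggregate, that ple's reversing no critical pair contribute nothing to the cover, and that the paper's no-isolated-vertex convention for difference graphs causes no mismatch between the vertices of $H(L)$ and the vertices appearing in $L$. The rest is a direct assembly of Corollary~\ref{cor:sharp-ex} and the ple-to-difference-graph dictionary already established.
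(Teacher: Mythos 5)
Your proof is correct and takes essentially the same route as the paper: the same height-two poset built from the graph of Corollary~\ref{cor:sharp-ex} (comparabilities are exactly the non-edges), and the same conversion of the ple's of a local realizer into a difference graph cover of $G(\P)$. The only difference is bookkeeping: the paper augments the realizer with two linear extensions and trims the outer blocks of each ple, concluding via $\mu(\mathcal{M}')\le\mu(\mathcal{M})+2$, whereas your direct inequality $\ldim(\P)\ge\ldc(G(\P))$ --- which does go through, provided each $H(L)$ is taken on the endpoints of its reversed pairs only, so that the no-isolated-vertex convention is met and multiplicities can only drop --- is slightly cleaner and avoids the additive loss.
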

\begin{proof}
We may assume that $n$ is even.
If $n$ is odd, then construct $\P$ as below on $n-1$ elements and add a point incomparable to everything else. The local dimension will increase by at most 1.

Let $G$ be the bipartite graph guaranteed by Corollary \ref{cor:sharp-ex} with partite classes $A$ and $B$ where $|A|=|B| = n/2$ and $\ldc(G) = \Omega(n/\log n)$.
Construct a height-two poset $\P$ where $A$ and $B$ are the minimal and maximal elements respectively, and $a \leq b$ in $\P$ if and only if $a\in A$, $b\in B$, and $ab \notin E(G)$.

Let $\mathcal{M}$ be an arbitrary local realizer of $\P$ with $\mu(\mathcal{M}) = \ldim(\P)$. We will create a different local realizer $\mathcal{M}'$ with $\mu(\mathcal{M}') \leq \mu(\mathcal{M})+2$ and with the property that $\mu(\mathcal{M'}) = \Omega(n/\log n)$. This will prove the lemma.

Let $L$ and $L'$ be two linear extensions of $\P$, each with block structure $A<B$ such that for any pair $a,a'\in A$, $a<a'$ in $L$ if and only if $a'<a$ in $L'$ and similarly for $B$.

Each $M\in \mathcal{M}$ has block form $A_1<B_1<A_2<B_2<\ldots<A_t<B_t$ for some $t\in \mathbb{N}$ where $A_1$ and $B_t$ may be empty. Create a new ple, $M'$, from $M$ simply by deleting all elements in $A_1$ and $B_t$.

So $\mathcal{M}'= \{L,L'\} \cup \{M': M\in \mathcal{M}\}$ is another local realizer of $\P$ with $\mu(\mathcal{M}') \leq \mu(\mathcal{M})+2$.
To see this, observe that every pair of elements of $A$ and each pair of elements in $B$ are reversed by the linear extensions $L$ and $L'$.
Every comparable pair is realized in $L$. Moreover, for each critical pair $(a,b) \in A\times B$ of $\P$, we have $a<b$ in $L$ and there is a ple $M \in \mathcal{M}$ with $a>b$.
Hence $a>b$ in the corresponding $M' \in \mathcal{M}'$ also.

So the difference graphs that correspond to the ple's in $\{M': M \in \mathcal{M}\}$ form a difference graph cover of $G$.
Since $\ldc(G) = \Omega \left(n/\log n\right)$, it follows that $\mu(\mathcal{M'}) = \Omega\left(n/\log n\right)$. Since $\mathcal{M}$ was a local realizer with $\mu(\mathcal{M}') \leq \mu(\mathcal{M})+2=\ldim(\P)+2$,
we have proved that $\ldim(\P) = \Omega \left(n/\log n\right)$.
\end{proof}

Theorem~\ref{thm:size-main} follows immediately from Lemma~\ref{lemma:upper-bound} and Lemma~\ref{lemma:sharpness}.

\section{Cartesian products and the Boolean lattice}\label{sec:Boolean}



In this section, we explore the local dimension of products of posets and the Boolean lattice.

\begin{defn}
    For two posets $\P=(P,\leq_{\P})$ and $\Q=(Q,\leq_{\Q})$, the \emph{Cartesian product} of $\P$ and $\Q$ is the poset $\P\times\Q=(P\times Q, \leq_{\P\times\Q})$, where $(p_1,q_1) \leq_{\P\times\Q} (p_2,q_2)$ if and only if $p_1 \leq_{\P} p_2$ and $q_1 \leq_{\Q} q_2$ in $Q$.
\end{defn}

\begin{thm}\label{thm:product}
    For any two posets $\P$ and $\Q$, $\ldim(\P\times\Q) \leq \ldim(\P)+\ldim(\Q)$.
\end{thm}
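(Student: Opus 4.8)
The plan is to build a local realizer of $\P\times\Q$ by combining a minimum-frequency local realizer $\mathcal{L}_\P$ of $\P$ with one $\mathcal{L}_\Q$ of $\Q$, where the crucial point — needed to obtain an additive rather than a multiplicative bound — is that I never form products of \emph{pairs} of ple's. Instead, each ple of $\mathcal{L}_\P$ is promoted to a single product ple that orders by the $\P$-coordinate first and breaks ties by a fixed linear extension $M_0$ of $\Q$, and symmetrically each ple of $\mathcal{L}_\Q$ is promoted to a product ple ordering by the $\Q$-coordinate first with ties broken by a fixed linear extension $L_0$ of $\P$.

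Concretely, fix $\mathcal{L}_\P$ with $\mu(\mathcal{L}_\P)=\ldim(\P)$ and $\mathcal{L}_\Q$ with $\mu(\mathcal{L}_\Q)=\ldim(\Q)$, and fix full linear extensions $L_0$ of $P$ and $M_0$ of $Q$. For a ple $L\in\mathcal{L}_\P$ with domain $S_L\subseteq P$, let $\hat L$ be the order on $S_L\times Q$ defined lexicographically: $(p_1,q_1)<_{\hat L}(p_2,q_2)$ iff $p_1<_L p_2$, or $p_1=p_2$ and $q_1<_{M_0}q_2$. Symmetrically, for $M\in\mathcal{L}_\Q$ with domain $T_M\subseteq Q$, let $\check M$ order $P\times T_M$ by the $\Q$-coordinate first (via $M$) and break ties by $L_0$. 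First I would check that each $\hat L$ and $\check M$ is a genuine ple of $\P\times\Q$: since $L$ and $M_0$ extend $\P$ and $\Q$ respectively, the lexicographic order $\hat L$ extends the product order restricted to $S_L\times Q$, and similarly for $\check M$. I then set $\mathcal{L}=\{\hat L: L\in\mathcal{L}_\P\}\cup\{\check M: M\in\mathcal{L}_\Q\}$.

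The frequency bound is then immediate from this design: an element $(p,q)$ lies in $\hat L$ exactly when $p\in S_L$ and in $\check M$ exactly when $q\in T_M$, so its frequency in $\mathcal{L}$ is $\mu(p,\mathcal{L}_\P)+\mu(q,\mathcal{L}_\Q)\le\ldim(\P)+\ldim(\Q)$. The remaining substance is verifying that $\mathcal{L}$ is a local realizer. Comparable pairs are easy: if $(p_1,q_1)<(p_2,q_2)$ with $p_1<p_2$, then some $\hat L$ realizing $p_1<p_2$ realizes it, and if $p_1=p_2$ (so $q_1<q_2$), then some $\check M$ does.

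The step I expect to be the main obstacle is reversing the incomparable pairs, specifically the ``crossing'' case. If $(p_1,q_1)\incomp(p_2,q_2)$, then either some coordinate is incomparable — say $p_1\incomp p_2$, whence the two ple's of $\mathcal{L}_\P$ that reverse $p_1,p_2$ lift to $\hat L,\hat{L'}$ reversing the product pair (symmetrically if $q_1\incomp q_2$) — or both coordinates are comparable. In the latter case I would argue that incomparability forces $p_1\ne p_2$ and $q_1\ne q_2$ with the two comparabilities pointing in opposite directions, say $p_1<p_2$ and $q_2<q_1$; then the $\hat L$ realizing $p_1<p_2$ gives $(p_1,q_1)<(p_2,q_2)$, while the $\check M$ realizing $q_2<q_1$ gives $(p_1,q_1)>(p_2,q_2)$, so both orders appear. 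Once this case analysis is complete, $\mathcal{L}$ is a valid local realizer of the claimed maximum frequency, and the theorem follows.
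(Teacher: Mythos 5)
Your proof is correct and follows essentially the same route as the paper's: both lift each ple of the $\P$-realizer to a product ple ordered by the $\P$-coordinate with ties broken by a fixed linear extension of $\Q$ (and symmetrically for $\Q$), observe that frequencies add, and verify the realizer property by the same case analysis on incomparable pairs. Your handling of the ``crossing'' case ($p_1<p_2$ with $q_2<q_1$) matches the paper's cases (2) and (4) exactly, so there is nothing to add.
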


\begin{proof}
Let $\LL = \{L_1, \ldots, L_s\}$ and $\MM = \{M_1, \ldots, M_t\}$ be local realizers of $\P$ and $\Q$ respectively, such that $\mu(\LL) =\ldim(\P)$ and $\mu(\MM)=\ldim(\Q)$.


Let $L_0$ be fixed a linear extension of~$\P$ and let $M_0$ be a fixed linear extension of~$\Q$.
For each $i\in [s]$, define a ple ${L'_i}$ on $\P\times\Q$  with elements $\{(a,b): a\in L_i, b\in Q\}$ such that $(a,b)<(a',b')$ in $L'_i$ if and only if (1) $a < a'$ in $L_i$ or (2) $a=a'$ and $b<b'$ in $M_0$.
For each $j\in [t]$, we define $M'_j$ similarly.
Let $\NN = \{{L'_1}, \dots, {L'_s}\} \cup  \{{M'_1}, \dots, {M'_t}\}$. We claim that $\mathcal{N}$ is a local realizer for $\P\times \Q$ with $\mu(\NN) \leq \ldim(\P) + \ldim(\Q)$.

Observe that $(x,y) \in P \times Q$ appears in ${L'_i}$ if and only if $x \in L_i$ and appears in ${M'_j}$ if and only if $y \in M_j$.
Thus,
\begin{align*}
    \mu\bigl((x,y), \NN \bigr) = \mu(x, \LL) + \mu(y, \MM) \leq \ldim(\P) + \ldim(\Q).
\end{align*}


To see that $\NN$ is a local realizer of~$\P\times\Q$, consider two pairs $(a,b),(c,d)\in P\times Q$.
If $(a,b) \leq (c,d)$ in $\P\times\Q$, then $a \leq c$ in $\P$.
Because $\LL$ is a local realizer of~$\P$, there exists $i$ such that $a \leq c$ in $L_i$.
By the definition of~${L'_i}$, $(a,b) \leq (c,d)$ in ${L'_i}$.

If $(a,b) \incomp (c,d)$, it suffices to prove that there is a ple in $\NN$ with $(a,b)>(c,d)$. When $a=c$ or $b=d$, the result follows easily from the fact that $\LL$ and $\MM$ are local realizers for $\P$ and $\Q$, so we assume $a\neq c$ and $b\neq d$.
Since $(a,b) \incomp (c,d)$, one of the following holds: (1) $a \incomp c$, (2) $a>c$ while $b < d$, (3) $b\incomp d$, or (4) $a<c$ with $b>d$.
For cases (1) and (2), we have $a \incomp c$ or $a>c$.
Because $\LL$ is a local realizer of~$\P$, there exists $L_i\in \LL$ with $a>c$. Therefore $(a,b) > (c,d)$ in $L'_i$. The argument is similar if $b\incomp d$ or $b>d$.

Thus, $\NN$ is a local realizer of~$\P \times \Q$.
\end{proof}



Now consider the Boolean lattice $\textbf{2}^{n}$ which is the Cartesian product of $n$ chains of height 2.
According to Theorem~\ref{thm:product}, the local dimension of $\textbf{2}^{n}$ is at most $n$ because the local dimension of a chain is 1.

For any integer $s\in\{0,1,\ldots,n\}$, we denote $\binom{[n]}{s}$ to be all the subsets of $[n]$ that have size equal to $s$.
We call this ``layer $s$'' or, when not a tongue-twister, the $s^{\rm th}$ layer. Let $P(s,t;n)$ be the subposet of $\textbf{2}^{n}$ induced by layers $s$ and $t$.

Following the notation in~\cite{BKKT}, we let $\dim(s,t;n)$ and $\ldim(s,t;n)$ denote, respectively, the dimension and the local dimension of $P(s,t;n)$.
Since both dimension and local dimension are monotone under the deletion of elements,  $\ldim(s,t;n)$ gives a lower bound on $\ldim(\textbf{2}^{n})$.

Note that $\dim(1,n-1;n)=n$ because those layers form a standard example, but $\ldim(1,n-1;n)=3$. Hurlbert, Kostochka, and Talysheva~\cite{HKT}, established that $\dim(2,n-2;n)=n-1$ if $n\geq 5$ and $\dim(2,n-3;n)=n-2$ for $n\geq 6$. Moreover, F\"uredi~\cite{Fur} showed that for every $k\geq 3$ and $n$ sufficiently large, $\dim(k,n-k;n)=n-2$.

In order to establish a lower bound of $(1-o(1))\frac{n}{2e\ln n}$ in Theorem~\ref{thm:BooleanLatticeLB}, we again use difference graphs.

\begin{proof}[Proof of Theorem~\ref{thm:BooleanLatticeLB}]


For $n$ sufficiently large and $k=\lceil n/e \rceil$, we will show that $\ldim(1,n-k;n) = \Omega(n/\log n)$.
Consider the auxilary bipartite graph $G=G(1,n-k;n)=(\mathcal{V},\mathcal{S};E)$ with a vertex in $\mathcal{V}$ for each singleton, a vertex in $\mathcal{S}$ for each set of size $n-k$, and $\{i\}\in\mathcal{V}$ is adjacent to $S\in \mathcal{S}$ if and only if $i\not\in S$.
In other words, two vertices are adjacent if they represent a critical pair in $P(1,n-k;n)$.


Let $b=2\ln n$.
As we have seen above, the local dimension of $P(1,n-k;n)$ is at least $\ldc(G)$. For a difference graph $H$ which is a subgraph of $G$ and a set $S\in \mathcal{S}$, we say $H$ is \textit{small in $S$} if there are less than $b$ edges incident to $S$ in $H$.
Otherwise, we say that $H$ is \textit{big in $S$}.
Any difference graph that is big in some $S$ is said to be \textit{big} itself.

 Let $\mathcal{H}$ be a difference graph cover of $G$ that realizes $\ell:=\ldc(G).$ We will consider two cases.
 
First suppose there is a set $S\in \mathcal{S}$ such that no $H\in \mathcal{H}$ is big in $S$, then all $k$ edges incident with $S$ must be covered by difference graphs that each contain at most $b-1$ of them. So $S$ appears in at least $\frac{k}{b-1} \geq \frac{k}{b} \geq \frac{n}{2e\ln n}$ difference graphs in $\mathcal{H}$ and as a result $\ldc(G) \geq  \frac{n}{2e\ln n}$ as desired. 
 
Now suppose that for each set $S\in \mathcal{S}$, there is at least one $H\in \mathcal{H}$ that is big in $S$. Recall that the neighborhoods of sets in $\mathcal{S}$ are nested in $H$, so if $H$ is big in $S_1, S_2, \ldots, S_t$, then there are $b$ singletons that are adjacent to each of these sets in $H$. In particular, these $b$ singletons are not elements of any of $S_1, S_2, \ldots, S_t$. So $t \leq \binom{n-b}{n-k}= \binom{n-b}{k-b}$.

Since there are at most $\ell$ difference graphs containing any one singleton and each big difference graph contains at least $b$ singletons, there are at most $\ell n/b$ big difference graphs.
Hence, there are at most $\frac{\ell n}{b}\binom{n-b}{k-b}$ sets $S$, with multiplicity, for which there is a difference graph in $\mathcal{H}$ which is big in $S$. Since for every $S\in\mathcal{S}$ there is a difference graph in $\mathcal{H}$ that is big in $S$, we have the following inequality:
\begin{align}
    \frac{\ell n}{b}\binom{n-b}{k-b} \geq \binom{n}{k}. \label{subset:ub}
\end{align}
Since $k=\lceil n/e \rceil$ and $b=2\ln n$, in this case we have
\begin{align*}
    \ell    &\geq    \frac{b}{n}\binom{n}{k}\binom{n-b}{k-b}^{-1}
            \geq    \frac{b}{n}\left(\frac{n}{k}\right)^b   
                    \geq     \frac{\ln n}{n}e^{2\ln n} = n \ln n.
\end{align*}
Therefore, for $n$ sufficiently large, we may conclude
    $\ldim(\textbf{2}^n) \geq  \ldim(1,n-k;n)   \geq    \frac{n}{2e\ln n}.$
\end{proof}

\section{Removable pair and quadruple}\label{sec:rem_pair}

In this section, we consider the analogue of the  removable pair conjecture for local dimension.
Recall the following theorem:
\begin{theorem}[Bogart~\cite{Bog72}]\label{thm:bog}
    Let $\P$ be poset and let $\Inc(\P) = \{(x,y): x,y\in P \text{ and } x\incomp y\}$.
    Suppose $C_a$ and $C_b$ are chains of~$P$ such that $(a,b)\in \Inc(\P)$ for each $x\in C_a$ and $y\in C_b$. Then there is a linear extension $L$ of $\P$ with $x<y$ in $L$ for each $(x,y)\in \Inc(\P)$ with $x\in C_a$ or $y\in C_b$.
\end{theorem}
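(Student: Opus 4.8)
The plan is to read the conclusion as a statement about reversing a prescribed family of incomparable pairs, and to reduce the existence of $L$ to an acyclicity check. Write $x\incomp y$ for incomparability in $\P$, and interpret the hypothesis as: every element of $C_a$ is incomparable to every element of $C_b$. First I would record that $C_a\cap C_b=\emptyset$, since an element lying in both would have to be incomparable to itself. The required linear extension must satisfy (i) $x<y$ whenever $x\in C_a$ and $x\incomp y$ (each chain element of $C_a$ is pushed below all of its incomparable elements) and (ii) $x<y$ whenever $y\in C_b$ and $x\incomp y$ (each chain element of $C_b$ is pushed above all of its incomparable elements); these are precisely the pairs $(x,y)\in\Inc(\P)$ with $x\in C_a$ or $y\in C_b$. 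Since a linear extension realizing a given set of added comparabilities exists if and only if the union of $\le_{\P}$ with those added directed pairs has acyclic transitive closure, the theorem reduces to an acyclicity statement, after which any topological order of the resulting acyclic relation serves as $L$.

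The engine is a single-chain lemma: for any poset and any chain $C$, adjoining the relation ``$c<v$ for all $c\in C$ and all $v$ with $c\incomp v$'' yields an acyclic relation, and symmetrically for pushing a chain above its incomparable elements. I would prove this by taking a shortest directed cycle and labelling each edge as a \emph{poset-edge} (from $\le_{\P}$) or an \emph{added edge}. An added edge leaves $C$ and lands outside $C$ (its target is incomparable to a chain element, hence not in the chain), so added edges start only at $C$-vertices; consequently every added edge is followed by a poset-edge, and minimality forbids two consecutive poset-edges, so the cycle alternates added/poset edges and has even length. Labelling the $C$-vertices $z_0,z_2,\dots$, each added edge gives $z_{2i}\incomp z_{2i+1}$ and each poset-edge gives $z_{2i+1}<_{\P}z_{2i+2}$; comparing $z_{2i}$ and $z_{2i+2}$ inside the chain $C$ forces $z_{2i}<_{\P}z_{2i+2}$ (otherwise $z_{2i+1}<_{\P}z_{2i}$ would contradict incomparability), and chaining around the cycle yields $z_0<_{\P}z_2<_{\P}\cdots<_{\P}z_0$, impossible.

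Equipped with this lemma I would proceed in two stages: push $C_a$ down to obtain a poset $\P_a=(P,\le_a)$, then push $C_b$ up inside $\P_a$, and take $L$ to be any linear extension of the resulting order. Condition (i) is then immediate because $L$ extends $\le_a$. For (ii), fix a $\P$-incomparable pair $(x,w)$ with $w\in C_b$; if $x<_a w$ then $x<_L w$, and if $x\incomp_a w$ then the second push gives $x<_L w$, so everything is fine provided the case $w<_a x$ never arises.

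The main obstacle, and the only place where total incomparability of $C_a$ and $C_b$ is essential, is exactly this compatibility of the two stages: I must show that pushing $C_a$ down never drags a $\P$-incomparable partner of a $C_b$-element above that element. I would prove that for $w\in C_b$ every relation $w<_a x$ is already a relation $w<_{\P}x$: along a shortest witnessing path out of $w$, the first added edge would start at a $C_a$-vertex reached from $w$ by poset-edges, giving $w<_{\P}(\text{a }C_a\text{-element})$, which total incomparability forbids; hence the path uses only poset-edges and $w<_{\P}x$. Thus $w<_a x$ with $x\incomp w$ cannot occur, every $\P$-incomparable partner of $w$ is either already below $w$ in $\P_a$ or is pushed below $w$ in the second stage, and (ii) follows. (Alternatively, one can run a single combined shortest-cycle argument on $\le_{\P}$ together with both added families, in which the length-two blocks necessarily ``start in $C_a$ and end in $C_b$'' and total incomparability again supplies the contradiction; the two-stage route merely isolates the one decisive use of the hypothesis.)
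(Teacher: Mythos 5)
The paper does not prove this theorem at all: it is quoted as a classical result of Bogart~\cite{Bog72}, so there is no internal proof to compare against and your argument has to stand on its own. It does. Your reduction of the existence of $L$ to acyclicity of the digraph formed by $\le_{\P}$ together with the prescribed arcs, your single-chain lemma via a shortest-cycle argument (added arcs leave the chain, two consecutive poset arcs contradict minimality, so the cycle alternates and the chain vertices satisfy $z_0<_{\P}z_2<_{\P}\cdots<_{\P}z_0$), and the two-stage composition are all sound; this is in effect the standard modern proof pattern (equivalent to checking the Trotter--Moore alternating-cycle criterion for the set of pairs being reversed), whereas Bogart's original treatment is a direct construction. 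Crucially, you identified the one genuinely delicate point—the compatibility of the two pushes—and proved it correctly: if $w\in C_b$ and $w<_a x$ after pushing $C_a$ down, a shortest witnessing path either uses only poset arcs (so $w<_{\P}x$, hence $x$ was never $\P$-incomparable to $w$) or its first added arc starts at some $c\in C_a$ with $w<_{\P}c$, contradicting the total incomparability of the chains; this is exactly where the hypothesis is needed, and without this claim the two-stage argument would be broken. Two routine points you should make explicit: the poset must be finite (topological sorting of an acyclic relation requires it, and finiteness is implicit throughout the paper), and in the alternation step a length-two cycle of poset arcs is excluded by antisymmetry rather than by the shortcut argument. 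Neither affects correctness.
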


We use this result to show that, by removing any two chains from a poset in which no element in the first chain relates to any in the second, the local dimension decreases by at most two.

\begin{theorem}[Two Chain Removal]\label{thm:twoChainsRemoval}
    If $C_1$ and $C_2$ are chains of the poset $\P$ with $\P-(C_1 \cup C_2)$ nonempty and each element of $C_1$ is incomparable with each element of $C_2$, then
    \begin{align*}
        \ldim(\P)     \le     \ldim(\P-(C_1 \cup C_2))+2 .
    \end{align*}
\end{theorem}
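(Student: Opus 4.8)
The plan is to keep an optimal local realizer of the smaller poset and glue on just two full linear extensions of $\P$ that dispose of every incomparable pair meeting $C_1\cup C_2$. Write $\P' := \P-(C_1\cup C_2)$ and fix a local realizer $\mathcal{M}$ of $\P'$ with $\mu(\mathcal{M})=\ldim(\P')$. Since $\P'$ is a subposet of $\P$, every ple of $\P'$ is also a ple of $\P$, so $\mathcal{M}$ already realizes each comparability and each incomparability whose two elements both lie in $\P'$. It remains only to handle pairs that meet $C_1\cup C_2$.

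The two additional ple's come from Theorem~\ref{thm:bog}. Because $C_1,C_2$ are chains and each element of $C_1$ is incomparable to each element of $C_2$, Bogart's theorem applies with $(C_a,C_b)=(C_1,C_2)$ and produces a linear extension $L_1$ of $\P$ in which every element of $C_1$ lies below all of its incomparable elements and every element of $C_2$ lies above all of its incomparable elements. Applying the theorem again with the roles swapped, $(C_a,C_b)=(C_2,C_1)$, yields $L_2$ in which $C_2$ is pushed below, and $C_1$ above, their respective incomparable elements. I claim $\mathcal{L}:=\mathcal{M}\cup\{L_1,L_2\}$ is a local realizer of $\P$: all comparabilities are respected by $L_1$ and $L_2$ since these are genuine linear extensions of $\P$, and for incomparabilities one checks the three pair types meeting $C_1\cup C_2$ --- a $C_1$--$\P'$ pair, a $C_2$--$\P'$ pair, and a $C_1$--$C_2$ pair --- each of which $L_1$ and $L_2$ order in opposite directions.

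To finish I would bound the frequency. An element of $\P'$ appears in at most $\ldim(\P')$ ple's of $\mathcal{M}$ together with $L_1$ and $L_2$, so its frequency is at most $\ldim(\P')+2$; an element of $C_1\cup C_2$ appears only in $L_1,L_2$, giving frequency $2\le\ldim(\P')+2$ (here $\ldim(\P')\ge 1$ because $\P'$ is nonempty). Hence $\ldim(\P)\le\ldim(\P')+2$.

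The hard part is the incomparability verification of the middle step, which is entirely bookkeeping: one must track which coordinate of a pair lies in $C_1$ versus $C_2$ when invoking Bogart's conclusion ``$x<y$ for each $(x,y)\in\Inc(\P)$ with $x\in C_a$ or $y\in C_b$,'' noting for instance that an incomparable $C_1$--$\P'$ pair is reversed by the ``$x\in C_a$'' clause in $L_1$ but by the ``$y\in C_b$'' clause in $L_2$. Once this is set up, each pair type reverses cleanly, and the mutual incomparability of $C_1$ and $C_2$ is exactly the hypothesis that makes both invocations of Bogart's theorem legitimate, so no further difficulty arises.
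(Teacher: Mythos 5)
Your proof is correct and takes essentially the same approach as the paper's: augment an optimal local realizer of $\P-(C_1\cup C_2)$ with the two linear extensions given by Bogart's theorem, applied once with $(C_a,C_b)=(C_1,C_2)$ and once with the roles swapped, then bound frequencies. The paper's own proof is just terser; your case analysis of the incomparable pair types and the explicit frequency bookkeeping spell out what it leaves implicit.
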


\begin{proof}
Take a local realizer $\LL$ of $\P-(C_1 \cup C_2)$.
Let $L_1$ be the linear extension obtained from Theorem~\ref{thm:bog} when $C_1 = C_a$ and $C_2=C_b$. Reversing the roles of $C_1$ and $C_2$, let $L_2$ be the linear extension from Theorem~\ref{thm:bog} when $C_1 = C_b$ and $C_2=C_a$.
Then $\LL'=\LL \cup \{L_1,L_2\}$ is a local realizer of $\P$ since  all the incomparabilities between $C_1$ and $C_2$ were reversed, while preserving comparabilities because $L_1$ and $L_2$ are linear extensions.
Since $\mu(\LL')=\mu(\LL)+2$, we obtain $\ldim(\P) \le \ldim(\P-(C_1 \cup C_2))+2$.
\end{proof}

When one of the chains in Theorem~\ref{thm:twoChainsRemoval} is empty, we obtain the following corollary.
\begin{cor}\label{cor:oneChainRemoval}
If $C$ is a chain of the poset $\P$ and $\P-C$ is nonempty, then
\begin{align*}
    \ldim(\P) \le \ldim(\P-C)+2.
\end{align*}
\end{cor}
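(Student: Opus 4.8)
The plan is to derive Corollary~\ref{cor:oneChainRemoval} directly from Theorem~\ref{thm:twoChainsRemoval} by taking one of the two chains to be empty. The statement of the corollary already signals this intention: when $C_2 = \emptyset$, the hypothesis that ``each element of $C_1$ is incomparable with each element of $C_2$'' holds vacuously, and $\P - (C_1 \cup C_2) = \P - C_1$. So setting $C_1 = C$ and $C_2 = \emptyset$ should immediately give $\ldim(\P) \le \ldim(\P - C) + 2$, which is exactly the desired conclusion.

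The one subtlety I would check is whether the machinery behind Theorem~\ref{thm:twoChainsRemoval} still functions when a chain is empty. The proof of that theorem invokes Bogart's theorem (Theorem~\ref{thm:bog}) twice to build two linear extensions $L_1, L_2$ that reverse all incomparabilities between $C_1$ and $C_2$. With $C_2 = \emptyset$ there are no such cross-incomparabilities to reverse, so the role of $L_1$ and $L_2$ degenerates: Bogart's theorem with $C_b = \emptyset$ (or a trivially chosen chain) simply returns some linear extension $L$ of $\P$, and the pair $\{L_1, L_2\}$ still augments the local realizer $\LL$ of $\P - C$ into a local realizer of $\P$. Concretely, I would verify that $\LL \cup \{L_1, L_2\}$ realizes every comparability of $\P$ (immediate, since $L_1, L_2$ are genuine linear extensions of all of $\P$) and every incomparability (those entirely inside $\P - C$ are handled by $\LL$, and any incomparability involving an element of $C$ is reversed by the pair $L_1, L_2$ so long as these extensions disagree on it). A clean way to guarantee the latter is to take $L_1$ and $L_2$ to be mutually reversing on the placement of $C$ relative to the rest, exactly as in the proof of Theorem~\ref{thm:twoChainsRemoval}.

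If one prefers not to rely on the empty-chain edge case of Theorem~\ref{thm:twoChainsRemoval}, an alternative is to give a self-contained two-line argument: start with a local realizer $\LL$ of $\P - C$, adjoin two linear extensions $L_1, L_2$ of the full poset $\P$ chosen so that every incomparable pair involving an element of $C$ is reversed between them, and observe that $\mu(\LL \cup \{L_1, L_2\}) = \mu(\LL) + 2$. Since such $L_1, L_2$ exist (place the chain $C$ in reverse relative orders, extending arbitrarily), this yields $\ldim(\P) \le \ldim(\P - C) + 2$ directly.

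The main obstacle, such as it is, is purely a matter of bookkeeping: confirming that Bogart's theorem is still applicable (or can be trivially bypassed) when one of its input chains is empty, and that the frequency bound $\mu(\LL') = \mu(\LL) + 2$ is preserved since exactly two extensions are added and each contains every element of $\P$. There is no genuine combinatorial difficulty here; the corollary is a specialization of the preceding theorem, and the proof should be a single sentence invoking Theorem~\ref{thm:twoChainsRemoval} with $C_2 = \emptyset$.
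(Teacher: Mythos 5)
Your proposal is correct and takes essentially the same route as the paper: the paper derives Corollary~\ref{cor:oneChainRemoval} precisely by specializing Theorem~\ref{thm:twoChainsRemoval} to the case where one of the two chains is empty (setting $C_1=C$ and $C_2=\emptyset$, so the incomparability hypothesis holds vacuously). Your additional verification that Bogart's theorem and the frequency count $\mu(\LL')=\mu(\LL)+2$ survive the empty-chain edge case is sound but is exactly the bookkeeping the paper leaves implicit.
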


If a poset has incomparable elements, one minimal and one maximal, then deleting them decreases the local dimension by at most 1.

\begin{lemma}\label{lem:twoElementsMaximal}
    If $\P$ is a poset with at least 3 elements, such that $x$ is a minimal element, $y$ is a maximal element, and $x$ is incomparable to $y$, then
    \begin{align*}
        \ldim(\P)     \le     \ldim(\P-\{x,y\})+1 .
    \end{align*}
\end{lemma}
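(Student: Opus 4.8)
The plan is to reuse an optimal local realizer of $\P-\{x,y\}$, repair it with a single new linear extension, and exploit that $x$ is minimal and $y$ is maximal to absorb the remaining work without a \emph{second} new extension. This is exactly the saving that should turn the $+2$ of Theorem~\ref{thm:twoChainsRemoval} (applied to the cross-incomparable chains $\{x\}$ and $\{y\}$, whose only cross pair $x\incomp y$ satisfies the hypothesis) into $+1$. Concretely, let $\LL$ be a local realizer of $\P':=\P-\{x,y\}$ with $\mu(\LL)=\ell:=\ldim(\P')$. The relations of $\P$ not already handled by $\LL$ are precisely those involving $x$ or $y$: the comparabilities $x<z$ and $z<y$, and, for each element incomparable to $x$ or to $y$, both orientations. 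I would split these into the \emph{easy} orientations ($x$ below the elements incomparable to it, $y$ above the elements incomparable to it, and $x<y$) and the \emph{hard} orientations (the reverses).

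First I would dispatch all hard orientations together with every comparability using one linear extension. Applying Theorem~\ref{thm:bog} with $C_a=\{y\}$ and $C_b=\{x\}$ (legitimate since $x\incomp y$) produces a linear extension $L^*$ of $\P$ in which $u<x$ for every $u\incomp x$, $y<v$ for every $v\incomp y$, and $y<x$; being a linear extension of $\P$, it also realizes every comparability. Adjoining $L^*$ to $\LL$ raises the frequency of every element by exactly $1$.

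Next I would realize the easy orientations by editing ple's already present rather than adding new ones. Because $x$ is minimal, prepending $x$ to the bottom of any ple of $\LL$ is again a ple of $\P$ and forces $x<u$ for every element $u$ of that ple; dually, appending $y$ to the top forces $y>v$. Performing only such insertions leaves every \emph{old} element's frequency untouched, so after adjoining $L^*$ each old element has frequency at most $\ell+1$. If I insert $x$ at the bottom of a sub-family of $\LL$ whose union contains $\{u:u\incomp x\}$ and $y$ at the top of a sub-family whose union contains $\{v:v\incomp y\}$ (using at least one common ple to secure $x<y$), then together with $L^*$ the collection is a local realizer of $\P$: comparabilities and the hard orientations come from $L^*$, the easy orientations from the edited ple's, and every relation internal to $\P'$ survives because each edited ple still restricts to its original.

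The only quantity left to control is the frequency of $x$ and $y$ themselves, and this is the main obstacle. The frequency of $x$ equals $1$ (from $L^*$) plus the number of ple's of $\LL$ into which $x$ was inserted, so to keep it at most $\ell+1$ the set $\{u:u\incomp x\}$ must be covered by at most $\ell$ ple's of $\LL$, and symmetrically for $y$. Establishing this covering bound is where the choice of $\LL$ matters: I would select $\LL$ among frequency-$\ell$ realizers so that few of its ple's already cover the ground set, exploiting the trade-off that a realizer placing each element in at most $\ell$ ple's cannot reverse many incomparabilities using only ``small'' ple's, so its ple's are forced to be spread out enough that at most $\ell$ of them cover any prescribed incomparability set. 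With this covering secured the construction gives $\ldim(\P)\le\ell+1=\ldim(\P-\{x,y\})+1$, as required.
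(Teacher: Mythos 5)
Your construction is structurally the same as the paper's: keep an optimal local realizer $\LL$ of $\P-\{x,y\}$, add a single Bogart extension $L^*$, and obtain the remaining orientations by inserting $x$ at the bottom and $y$ at the top of ple's already present in $\LL$, which is legitimate because $x$ is minimal, $y$ is maximal, and $x\incomp y$. However, there is a genuine gap at exactly the step you yourself call ``the main obstacle'': you need at most $\ell=\ldim(\P-\{x,y\})$ ple's of $\LL$ whose union contains every element incomparable to $x$ and every element incomparable to $y$, and you never prove this. What you offer instead---choosing $\LL$ ``so that few of its ple's already cover the ground set,'' on the grounds that an optimal realizer's ple's are ``forced to be spread out enough that at most $\ell$ of them cover any prescribed incomparability set''---is an unproven assertion, not an argument; no precise extremal property of optimal realizers is formulated, and it is not clear how one would be. Since this covering bound is the entire content of the lemma (everything else in your write-up is routine), the proof is incomplete as it stands.

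The missing idea is a short observation that holds for \emph{every} local realizer $\LL$, so no special selection of $\LL$ is needed (your framing that ``the choice of $\LL$ matters'' is itself off the mark): fix an arbitrary element $z$ of $\P-\{x,y\}$ and let $L_1,\dots,L_d$ be the ple's of $\LL$ that contain $z$, so that $d=\mu(z,\LL)\le\ell$. Every other element $w$ of $\P-\{x,y\}$ appears in one of $L_1,\dots,L_d$: if $w$ and $z$ are comparable, some ple of $\LL$ must witness that comparability, and if they are incomparable, some ple must place $w<z$; in either case that ple contains both $w$ and $z$. Hence these at most $\ell$ ple's cover all of $\P-\{x,y\}$, and inserting $x$ below and $y$ above each of them (which also secures $x<y$) gives $x$ and $y$ frequency $d+1\le\ell+1$, while old elements keep frequency at most $\ell+1$ after adjoining $L^*$; this completes the proof. (Incidentally, your orientation of Theorem~\ref{thm:bog}, with $C_a=\{y\}$ and $C_b=\{x\}$, is the correct one: the paper's text takes $C_a=\{x\}$, $C_b=\{y\}$, which would merely duplicate the orientations already provided by the modified ple's---an apparent slip in the published argument that your version repairs.)
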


\begin{proof}
Let $\LL=\{L_1,\dots,L_t\}$ be a local realizer of $\P-\{x,y\}$ such that $\mu(\LL)=\ldim(\P-\{x,y\})$.
Let $z$ be an arbitrary element of $\P-\{x,y\}$.
By relabeling, we may assume that $\{L_1,\ldots,L_d\}$ is precisely the set of all $L_i \in \LL$ where $z \in L_i$.
Note each element in $\P-\{x,y\}$ appears in at least one of the ple's in $\{L_1, \ldots, L_d\}$.

For each $i\in \{1,\dots,d\}$, create a new ple $M_i$ with block structure $x<L_i<y$ such that if $\ell< \ell'$ in $L_i$, then  $\ell<\ell'$ in $M_i$.

Let $L'$ be the linear extension of $\P$ guaranteed by Theorem~\ref{thm:bog} when $C_a= \{x\}$ and $C_b=\{y\}$.
Then $\LL'    =    \{M_1,\dots,M_d\}\cup\{L_{d+1},\dots,L_d\}\cup\{L'\}$ is a local realizer of $\P$. Further, $\mu(\LL') = \mu(\LL) +1 = \ldim(\P-\{x,y\})+1$ as desired.
\end{proof}

Further, if a poset has one minimal element and one maximal element such that each element is related to at least one of them, then deleting both decreases the local dimension by at most 1.

\begin{lemma}\label{lem:twoElementsSpecial}
    If $\P=(P,\le)$ be a poset with $|P|\ge3$, such that $x\in P$ is a minimal element, $y\in P$ is a maximal element, $x<y$, and there is no element that is incomparable to both $x$ and $y$, then
    \begin{align*}
        \ldim(\P)     \le     \ldim(\P-\{x,y\})+1 .
    \end{align*}
\end{lemma}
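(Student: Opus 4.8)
The plan is to mimic the proof of Lemma~\ref{lem:twoElementsMaximal}. Start from a local realizer $\LL=\{L_1,\dots,L_t\}$ of $\P-\{x,y\}$ with $\mu(\LL)=\ldim(\P-\{x,y\})$, fix an arbitrary $z\in P-\{x,y\}$, and relabel so that $L_1,\dots,L_d$ are exactly the ple's of $\LL$ containing $z$. Because $\LL$ is a local realizer, every other element of $P-\{x,y\}$ must share a ple with $z$ (some ple realizes the relation between them), so $L_1,\dots,L_d$ together cover $P-\{x,y\}$; moreover $d=\mu(z,\LL)\le\mu(\LL)$. For each $i\in[d]$ I form $M_i$ from $L_i$ by imposing the block structure $x<L_i<y$, which is a legitimate ple of $\P$ since $x$ is minimal and $y$ is maximal. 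I then set $\LL'=\{M_1,\dots,M_d\}\cup\{L_{d+1},\dots,L_t\}\cup\{L'\}$ for a single extra linear extension $L'$ of $\P$ constructed below. As $x$ and $y$ occur only in $M_1,\dots,M_d$ and in $L'$, each has frequency $d+1\le\mu(\LL)+1$, while every other element picks up at most the one appearance in $L'$; hence $\mu(\LL')\le\mu(\LL)+1$, as needed.

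Next I isolate what $L'$ must do. Write $D=\{w\in P : w\incomp x\}$ and $U=\{w\in P: w\incomp y\}$. Because no element is incomparable to both $x$ and $y$, and $x$ is minimal while $y$ is maximal, every $w\in D$ satisfies $w<y$, every $w\in U$ satisfies $x<w$, and $D\cap U=\emptyset$. Thus the only incomparable pairs involving $x$ or $y$ are $x\incomp w$ for $w\in D$ and $y\incomp w$ for $w\in U$, while $(x,y)$ is comparable. In each $M_i$, the element $x$ lies below and $y$ above everything, so $x<w$ holds whenever $w\in D$ appears in $M_i$ and $w<y$ holds whenever $w\in U$ appears in $M_i$; since $L_1,\dots,L_d$ cover $P-\{x,y\}$, one direction of every such incomparable pair is already realized. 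All comparabilities will be realized by the full linear extension $L'$, and every incomparability internal to $P-\{x,y\}$ stays realized because each ple of $\LL$ survives (as some $M_i$ or unchanged) with its order on $P-\{x,y\}$ intact. It therefore remains only to produce a linear extension $L'$ of $\P$ realizing the opposite directions, i.e. $w<x$ for all $w\in D$ and $y<w$ for all $w\in U$.

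Building $L'$ is the crux, and it is exactly where this case departs from Lemma~\ref{lem:twoElementsMaximal}. Since we want $x$ above all of $D$ and $y$ below all of $U$, the tempting move is to apply Theorem~\ref{thm:bog} with lower chain $\{y\}$ and upper chain $\{x\}$; but Bogart's theorem requires the two chains to be pairwise incomparable, and here $x<y$, so it does not apply. Instead I would construct $L'$ directly: let $\prec$ be the transitive closure of $\le$ together with the relations $w\prec x$ for $w\in D$ and $y\prec w$ for $w\in U$, and show that $\prec$ is a strict partial order, so that any linear extension of $(P,\prec)$ serves as $L'$. The one thing to verify, and the step I expect to be the main obstacle, is acyclicity. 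This follows from two reachability observations. First, starting from $x$ and following $\le$-steps and added relations, one never leaves $\{w:w>x\}$: the added relations leaving a node either leave a $D$-element toward $x$ or leave $y$ toward $U$, and from a node strictly above $x$ the former is unavailable (no $D$-element exceeds $x$) while the latter lands in $U\subseteq\{w:w>x\}$; hence no $D$-element is reachable from $x$, so no cycle can use an edge $w\prec x$. Second, after discarding those edges, any remaining cycle would traverse some $y\prec w$ with $w\in U$ and then need to return to $y$ from $w$; but the $\le$-successors of $w$ all lie in $\{v:v\ge w\}$, which excludes $y$ because $w\incomp y$, and an added relation with left endpoint $y$ can be used only starting from $y$ itself, so $y$ is never reached. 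Hence $\prec$ has no cycle.

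Once $L'$ is in hand, $\LL'$ realizes every comparability and the reversed directions $w<x$, $y<w$ through $L'$, the forward directions $x<w$, $w<y$ through the $M_i$'s, and all incomparabilities internal to $P-\{x,y\}$ through the surviving ple's; so $\LL'$ is a local realizer of $\P$, and the frequency estimate $\mu(\LL')\le\mu(\LL)+1=\ldim(\P-\{x,y\})+1$ finishes the argument.
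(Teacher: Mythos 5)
Your proof is correct, and its skeleton---the optimal realizer $\LL$ of $\P-\{x,y\}$, the relabeling via a fixed element $z$ so that $L_1,\dots,L_d$ cover all of $P-\{x,y\}$, and the ple's $M_i$ with block structure $x<L_i<y$---is exactly the paper's. Where you genuinely diverge is the finishing step. The paper never constructs a single linear extension of $\P$: it adds \emph{two} ple's, $R_x$ with block structure $I_x<x$ and $R_y$ with block structure $y<I_y$, where $I_x$ and $I_y$ are the sets of elements incomparable to $x$, respectively $y$ (your $D$ and $U$). These ple's are trivially valid (every element of $I_x$ is incomparable to $x$, so placing $x$ on top violates nothing), and the hypothesis that no element is incomparable to both $x$ and $y$ enters only through the frequency count: $I_x\cap I_y=\emptyset$ means no element appears in both $R_x$ and $R_y$, so every frequency rises by at most one. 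You instead realize all reversals with one full linear extension $L'$ of $\P$, which forces you to prove a Bogart-type existence statement that, as you correctly observe, Theorem~\ref{thm:bog} does not cover because the chains $\{x\}$ and $\{y\}$ are comparable; your acyclicity argument is sound, and it is where the hypothesis enters for you, via $I_y\subseteq\{w:w>x\}$---indeed it is genuinely needed there, since an element $w$ incomparable to both would create the cycle $w\prec x<y\prec w$. Both routes yield $\mu(\LL')\le\mu(\LL)+1$ and hence the lemma; the paper's costs essentially nothing beyond the disjointness observation, while yours pays for an explicit digraph argument but buys a cleaner check that all comparabilities are realized (since $L'$ is a linear extension of all of $\P$) and, in effect, a small reusable extension of Bogart's theorem.
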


\begin{proof}
Let $\LL=\{L_1,\dots,L_t\}$ be a local realizer of $\P-\{x,y\}$ such that $\mu(\LL)=\ldim(\P)$.
Let $z$ be an arbitrary element of $P-\{x,y\}$. As before, we may assume that $\{L_1,\ldots,L_d\}$ is precisely the set of all ple's in $\LL$ that contain $z$. Further, every element of $\P-\{x,y\}$ appears in at least one of these ple's.

For each $i\in\{1,\dots,d\}$, modify $L_i$ by adding $x$ and $y$ to obtain $M_i$ with block structure $x < L_i < y$.
Let $I_x$ be the set of all elements incomparable to $x$ and let $I_y$ the set of all elements incomparable to $y$. Let $R_x$ be a ple with block structure $I_x<x$ and $R_y$ a ple with block structure $y<I_y$.
Then $
    \LL'     =     \{M_1,\dots,M_d\}\cup\{L_{d+1},\dots,L_t\}\cup\{R_x,R_y\}$
is a local realizer of $\P$. Since $I_x \cap I_y=\emptyset$, one can quickly see that $\mu(\LL') = \mu(\LL)+1  = \ldim(\P-\{x,y\}) + 1$.
\end{proof}

The previous two lemmas can now be used to prove the Removable Pair Conjecture in the case of height 2 posets.

\begin{proof}[Proof of Theorem~\ref{thm:twoElementsHeightTwo}]
Let $\P$ be a height-two poset with minimal elements $A$ and maximal elements $B$.
If $\P$ has two elements $x\in A$ and $y\in B$ with $x$  incomparable to $y$, then $\ldim(\P) \le \ldim(\P-\{x,y\})+1$ by Lemma~\ref{lem:twoElementsMaximal}.

If however every element of $A$ is comparable to every element of $B$, then pick a pair $(x,y)\in A\times B$. Since $a<y$ for each $a\in A$ and $x<b$ for each $b\in B$, then by Lemma~\ref{lem:twoElementsSpecial}, $\ldim(\P) \le \ldim(\P-\{x,y\})+1$.

If, on the other hand, $\P$ has height 1 and at least three elements, then $\P$ is an antichain which has both dimension and local dimension equal to two, so the Removable Pair Conjecture holds for $\P$.
\end{proof}

Now we return to Theorem~\ref{thm:FourElementsRemoval} to show that, for any poset $\P$ with at least 5 elements, there are four elements that can be deleted from $\P$ such that the local dimension is reduced by at most two.

\begin{proof}[Proof of Theorem~\ref{thm:FourElementsRemoval}]
If $\P$ has height at least 4, then $\P$ has a chain with $4$ elements and, by Corollary~\ref{cor:oneChainRemoval}, the removal of those elements will reduce the local dimension by at most two.
If $\P$ has height at most 2, then we may use Theorem~\ref{thm:twoElementsHeightTwo} twice to find two pairs of elements which will reduce the local dimension by at most two.

So we are left with the case that $\P$ has height exactly $3$.
Consider a 3-element chain $a < b < c$.
If there exists a fourth element $z$ that is incomparable to both $a$ and $c$ (and therefore also $b$) then we can use Theorem~\ref{thm:twoChainsRemoval} to remove the two chains $\{a, b, c\}$ and $z$, reducing the local dimension by at most two.

Now, assume that for each 3-element chain $a<b<c$ in $P$ and each $z$ in $P$, either $a\leq z$ or $z\leq c$. In this case, fix a three element chain $a_0<b_0<c_0$ and observe that $\ldim(\P) \leq \ldim(\P-\{a_0,c_0\})+1$ by Lemma~\ref{lem:twoElementsSpecial}. If $\P-\{a_0,c_0\}$ has height at most 2, then Theorem~\ref{thm:twoElementsHeightTwo} guarantees the existence of two more elements $\{d_0,e_0\}$ such that $\ldim(\P-\{a_0,c_0\}) \leq \ldim(\P-\{a_0,c_0,d_0,e_0\})+1$ as desired.
If instead $\P-\{a_0,c_0\}$ has height 3, then consider a 3-element chain $a_1<b_1<c_1$ in $\P-\{a_0,c_0\}$. Because $\P-\{a_0,c_0\}$ is a subposet of $\P$, this chain was present in $\P$ and so for each element $z$ we have either $a_1\leq z$ or $z\leq c_1$. Therefore, $\ldim(\P-\{a_0,c_0\}) \leq \ldim(\P-\{a_0,c_0,a_1,c_1\})+1$ by Lemma~\ref{lem:twoElementsSpecial} as desired.
\end{proof}

\section{Local difference cover versus local bipartite cover}

Theorem~\ref{thm:CES} and Corollary~\ref{cor:sharp-ex} show that, for $G\sim\mathcal{G}(n/2,n/2,1/e)$, the local complete bipartite cover number ($\lbc(G)$) and the local difference graph cover number ($\ldc(G)$) are bounded below by $(1-o(1))\frac{n}{4e\ln n}$.  Is there a sequence of graphs $(G_n: n\geq 1)$ for which $\ldc(G_n)$ is constant while $\lbc(G_n)$ is unbounded?

Because every nonempty complete bipartite graph is a difference graph, it is clear that $\ldc(G)\leq\lbc(G)$ for every graph $G$.
In Proposition~\ref{prop:ldclbcUB}, we show that, for every difference graph $H=H(m,n;f)$, we have $\lbc(H)\leq\left\lceil \log_2(m+1)\right\rceil$, noting $\ldc(H)=1$. 
As a result, for any graph $G$ with $v$ vertices, $\lbc(G)/\ldc(G) = O(\log v)$. 
\begin{prop}\label{prop:ldclbcUB}
	Let $H=H(m,n;f)$ be a difference graph.
	Then $\lbc(H)\leq\left\lceil \log_2(m+1)\right\rceil$.
	Consequently, for all graphs $G$ on $v$ vertices,
	\begin{align*}
		\ldc(G)\leq\lbc(G)\leq\ldc(G)\left\lceil \log_2(v/2+1)\right\rceil .
	\end{align*}
\end{prop}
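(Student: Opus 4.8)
The plan is to establish the key per-graph bound $\lbc(H)\leq\lceil\log_2(m+1)\rceil$ for a single difference graph $H=H(m,n;f)$, and then lift it to arbitrary graphs $G$ by composing an optimal difference-graph cover of $G$ with these complete-bipartite covers of each piece. The consequence part is essentially bookkeeping once the main bound is proved: if $\mathcal{H}$ is a difference-graph cover of $G$ witnessing $\ldc(G)$, replace each $H\in\mathcal{H}$ by a complete-bipartite cover of $H$ with frequency at most $\lceil\log_2(v/2+1)\rceil$ (using that each difference subgraph has at most $v/2$ vertices in its smaller-indexed partite class), and then each vertex of $G$ lies in at most $\ldc(G)$ of the difference graphs, each contributing at most $\lceil\log_2(v/2+1)\rceil$ to its frequency.

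\emph{First I would} set up the recursive/dyadic decomposition that drives the logarithmic bound. Label the vertices $u_1,\dots,u_m$ in $U$ so that $N(u_1)\supseteq N(u_2)\supseteq\cdots\supseteq N(u_m)$ (the nested structure of a difference graph), and similarly order $w_1,\dots,w_n$ in $W$ so that neighborhoods are prefixes $\{w_1,\dots,w_{f(i)}\}$. The natural idea is a binary-splitting scheme on the index set $[m]$: at each level, the median threshold $t$ partitions $U$ into $\{u_i : f(i)\geq t\}$ and the rest, and one complete bipartite graph captures all edges crossing a chosen boundary. The crucial point is to choose the cover so that each individual vertex is touched by only $O(\log m)$ of the bipartite pieces, not merely that the total number of pieces is small.

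\emph{The cleanest route} I would pursue is a divide-and-conquer on the matrix/staircase structure: think of the edges as a Young-diagram–shaped (staircase) region determined by $f$. Recursively cover a staircase on index range $[1,m]$ by (i) a single complete bipartite graph $K$ joining the ``upper-left'' block $\{u_1,\dots,u_{\lceil m/2\rceil}\}$ to $W_{\min}:=\{w_1,\dots,w_{f(\lceil m/2\rceil)}\}$ — i.e. the largest complete bipartite rectangle guaranteed to lie inside the staircase across the median — together with (ii) the two smaller staircases lying strictly above and strictly to the right of $K$, each on roughly half the $U$-index range, which are covered recursively. One then checks that every vertex $u_i$ (resp. $w_j$) participates in at most one rectangle per recursion level, and the recursion depth is $\lceil\log_2(m+1)\rceil$; this yields the frequency bound. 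I would verify the base case (a single edge or empty staircase) and confirm that the rectangles from (i) across all recursive calls indeed cover every edge, using that the nested neighborhoods guarantee each off-median edge falls into exactly one of the two sub-staircases.

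\emph{The hard part will be} pinning down the induction so that the frequency bound is exactly $\lceil\log_2(m+1)\rceil$ rather than a looser $O(\log m)$, and in particular ensuring that the median-rectangle at each level does not double-charge a vertex that also appears in a sub-staircase rectangle at the same depth. The symmetry remark (that $H(m,n;f)=H(n,m;g)$) lets me assume without loss of generality that $m\leq n$, so the recursion depth is controlled by the smaller class; in the consequence, since a difference subgraph of $G$ has both partite classes inside $V(G)$, I can take the smaller class to have size at most $v/2$, giving the stated $\lceil\log_2(v/2+1)\rceil$. I would also double-check the edge case where $f$ is not strictly decreasing (so several $u_i$ share a neighborhood), since then a single median threshold may need careful handling to avoid empty rectangles, but this only helps the count and does not affect the logarithmic depth.
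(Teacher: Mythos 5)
Your proposal is correct and follows essentially the same route as the paper: both view $H$ as a Young diagram, peel off the median rectangle $\{u_1,\dots,u_{\lceil m/2\rceil}\}\times\{w_1,\dots,w_{f(\lceil m/2\rceil)}\}$, recurse on the two vertex-disjoint residual staircases (each with at most $\lfloor m/2\rfloor$ rows) so that each vertex meets at most one rectangle per level, and then obtain the consequence by composing an optimal difference-graph cover of $G$ with these bipartite covers, using the symmetry of difference graphs to recurse on the partite class of size at most $v/2$. The paper phrases the depth count as an induction giving $1+\lceil\log_2(\lceil(m-1)/2\rceil+1)\rceil=\lceil\log_2(m+1)\rceil$, which is exactly your recursion-depth bookkeeping.
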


\begin{proof}
	We note that it may be convenient to visualize $H=H(m,n;f)$ as a Young diagram in which the $i^{\rm th}$ row has length $f(i)$, for $i\in[m]$, so each square corresponds to an edge in the difference graph. (See Figure~\ref{fig:young}.)
	A complete bipartite graph cover is equivalent to a cover of the Young diagram with generalized rectangles. 
	That is, a bipartite graph corresponds to the product set $S\times T$ so that $S\subseteq[m]$, $T\subseteq[n]$ and $S\times T$ is contained entirely in the Young diagram.
	Then $\lbc(H)$ is the maximum number of generalized rectangles in any row or column.

	We will prove by induction on $m$ that the graph $H$ can be partitioned into complete bipartite graphs so that $\lbc(H)\leq\left\lceil \log_2(m+1)\right\rceil$.
	If $m=1$, the result is trivial. 
	We suppose the statement is true for $m-1\geq 0$ and prove that it is true for $m$.

	We use the complete bipartite graph $\left\{1,\ldots,\lfloor (m+1)/2\rfloor\right\} \times \left\{1,\ldots,f\left(\lfloor (m+1)/2\rfloor \right)\right\}$.
 	Remove the edges in this graph and there are two disconnected components:
	One has dimensions $\lceil (m-1)/2\rceil \times f\left(\lfloor (m+3)/2\rfloor \right)$
	The other has dimensions at most $\lfloor (m-1)/2\rfloor \times \left(n-f\left(\lfloor(m+1)/2\rfloor\right)\right)$.

	By the inductive hypothesis, we can cover the remaining edges of $H$ by complete bipartite graphs (equivalently, cover the remaining blocks of the Young diagram) with each of the components covered by complete bipartite graphs so that each vertex in $H$ appears in at most $\left\lceil\log_2 \left(\lceil (m-1)/2\rceil+1\right)\right\rceil$ of the covering graphs. 
	Hence,
	\begin{align*}
		\lbc(H) 	\leq 	1+\left\lceil\log_2 \left(\lceil (m-1)/2\rceil+1\right)\right\rceil 	= 	\left\lceil\log_2 \left(2\lceil (m+1)/2\rceil\right)\right\rceil . 
	\end{align*}

	This is equal to $\left\lceil\log_2 (m+1)\right\rceil$ if $m$ is odd and is equal to $\left\lceil\log_2 (m+2)\right\rceil$ if $m$ is even.  
	If $m>0$ and $m$ is even, then $\left\lceil\log_2 (m+2)\right\rceil=\left\lceil\log_2 (m+1)\right\rceil$.
\end{proof}

\section{Concluding remarks and open questions}

Theorem~\ref{thm:size-main} establishes that the maximum local dimension of an $n$-element poset is $\Theta(n/\log n)$.
We get a lower bound of $(1-o(1))\frac{n}{4e\ln n}$ and an upper bound of $(1+o(1))\frac{4n}{\log_2 (2n)}$.
We would like to know if the coefficients, $\frac{1}{4e}$ and $4\ln 2$, respectively, could be improved.

In Proposition~\ref{prop:ldclbcUB}, we establish an upper bound on $\lbc(H)$ for difference graphs $H$ that is logarithmic in the smallest partition class, however it is not clear whether this bound is achieved.
We would like to determine the largest value of $\lbc(H)$ over all difference graphs $H$. For difference graph $H_n=H(n,n;f_n)$ with $f_n(i)=n+1-i$, the construction in Figure~\ref{fig:young} for $H_{15}$ can be extended to show $\lbc(H_n)\leq \log(n+1) - 1$ when $n+1$ is a power of 2 and $n\geq 15$, but the following question remains:

\begin{question}\label{conj:ldclbcUB}

                Let $n+1$ be a power of $2$ and let $H_n=H(n,n;f_n)$ be the difference graph such that $f_n(i)=n+1-i$. What is the exact value of $\lbc(H_n)$?

\end{question}

\begin{figure}
		\begin{tikzpicture}[baseline,x=20pt, y=20pt]
			\draw (0,0) rectangle (1,-3);
			\draw (1,0) rectangle (3,-1);
			\draw (1,-1) rectangle (2,-2);
			\foreach \i in {1,...,3}
			{
				\node at (-0.25,-\i+0.5) {\i};	
				\node at (\i-0.5,0.325) {\i};
			}
		\end{tikzpicture}~\hspace*{25pt}~
		\begin{tikzpicture}[baseline,x=10pt, y=10pt]
			\draw (0,0) rectangle (5,-3);
			\draw (5,0) rectangle (7,-1);
			\draw (5,-1) rectangle (6,-2);
			\draw (0,-3) rectangle (3,-5);
			\draw (3,-3) rectangle (4,-4);
			\draw (0,-5) rectangle (1,-7);
			\draw (1,-5) rectangle (2,-6);
			\foreach \i in {1,...,7}
			{
				\node at (-0.5,-\i+0.5) {\i};	
				\node at (\i-0.5,0.625) {\i};
			}
		\end{tikzpicture}~\hspace*{25pt}~
		\begin{tikzpicture}[baseline,x=5pt, y=5pt]
			\draw (0,0) rectangle (8,-8);
			\draw (8,0) rectangle (13,-3);
			\draw (13,0) rectangle (15,-1);
			\draw (13,-1) rectangle (14,-2);
			\draw (8,-3) rectangle (11,-5);
			\draw (11,-3) rectangle (12,-4);
			\draw (8,-5) rectangle (9,-7);
			\draw (9,-5) rectangle (10,-6);
			\draw (0,-8) rectangle (3,-13);
			\draw (0,-13) rectangle (1,-15);
			\draw (1,-13) rectangle (2,-14);
			\draw (3,-8) rectangle (5,-11);
			\draw (3,-11) rectangle (4,-12);
			\draw (5,-8) rectangle (7,-9);
			\draw (5,-9) rectangle (6,-10);
		\end{tikzpicture}
	\caption{Young diagrams are given which represent complete bipartite graph covers  (partitions, in fact) of the edge set of $H_n=H(n,n,f_n)$ with $f_n(i)=n+1-i$ for $n=3,7,15$, respectively. The cases for $n=3,7$ are labeled. The cover for $H_3$ corresponds to the graphs $\{1,2,3\}\times\{1\}$, $\{1\}\times\{2,3\}$, and $\{2\}\times\{2\}$.  The cover for $H_{15}$ shows $\lbc(H_{15}) \leq 3$.}
	\label{fig:young}
\end{figure}
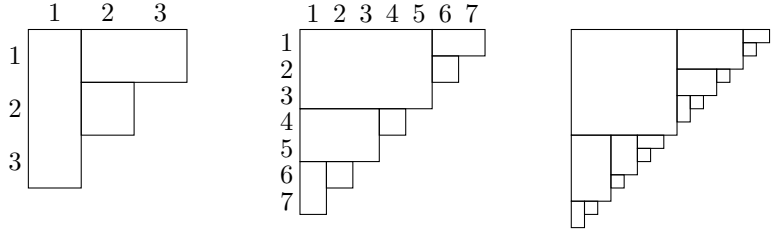

%
%

In Theorem~\ref{thm:BooleanLatticeLB}, it is natural to ask whether the trivial upper bound on $\ldim(\textbf{2}^{n})$ is tight.
\begin{question}\label{conj:BooleanLattice}
    Is it true that $\ldim(\textbf{2}^{n}) = n$ for all $n\geq 1$?
\end{question}

The Removable Pair conjecture is still open for the Dushnik--Miller dimension.
The version for local dimension, Conjecture~\ref{conj:TwoElementsRemoval}, is open as well.

Christophe Crespelle~\cite{CC} observed that, from the information theory perspective, our results imply that local dimension is optimal up to constant factor.
In particular, for a poset $\P$ with $n$ elements, we use $\log{n}$ bits to encode each element. Since the local dimension of $\P$ is $O(n/\log{n})$, $\P$ has a local realizer $\mathcal{L}$ whose ple's use a total of $O(n^2/\log n)$ elements and thus the number of bits used to express $\mathcal{L}$ is $O(n^2)$. This is best possible up to a constant factor because the number of labeled posets with $n$ elements is $2^{\Theta(n^2)}$~\cite{NumberOfPosets}, which means means that $\Theta(n^2)$ bits are needed to encode them as distinct posets.
Note that Dushnik--Miller dimension is not optimal in this respect since there are posets with $n$ elements and dimension $n/2$. So, the linear extensions  in an optimal realizer contain $n^2/2$ total elements which requires $\Theta(n^2\log n)$ bits for encoding.



\section{Acknowledgements}
All authors were supported in part by NSF-DMS grant \#1604458, ``The Rocky
Mountain-Great Plains Graduate Research Workshops in Combinatorics.'' Martin was supported by a grant from the Simons Foundation (\#353292, Ryan R. Martin). Masa\v{r}\'ik was supported by the project CE-ITI P202/12/G061 of GA \v{C}R and by the project SVV-2017-260452 of Charles University. Smith was supported in part by NSF-DMS grant \#1344199. Wang was supported in part by NSF-DMS grant \#1600811.

\end{document}